\newcommand{\End}{\mathrm{End}}
\newcommand{\wEnd}{\mathrm{wEnd}}
\newcommand{\sEnd}{\mathrm{sEnd}}
\newcommand{\swEnd}{\mathrm{swEnd}}
\newcommand{\Aut}{\mathrm{Aut}}
\theoremstyle{plain}
\newtheorem{theorem}{Theorem}[section]
\newtheorem{proposition}[theorem]{Proposition}
\newtheorem{lemma}[theorem]{Lemma}
\newtheorem{corollary}[theorem]{Corollary}
\newtheorem{algorithm}[theorem]{Algorithm} 
\def\im{\mathop{\mathrm{Im}}\nolimits} 
\def\ker{\mathop{\mathrm{Ker}}\nolimits} 
\def\rank{\mathop{\mathrm{rank}}\nolimits} 
\numberwithin{equation}{section}
\begin{document}

\title{On monoids of endomorphisms of a cycle graph}

\author{I. Dimitrova, V.H. Fernandes\footnote{This work is funded by national funds through the FCT - Funda\c c\~ao para a Ci\^encia e a Tecnologia, I.P., under the scope of the projects UIDB/00297/2020 and UIDP/00297/2020 (NovaMath - Center for Mathematics and Applications).},  J. Koppitz and T.M. Quinteiro\footnote{This work is funded by national funds through the FCT - Funda\c c\~ao para a Ci\^encia e a Tecnologia, I.P., under the scope of the projects UIDB/00297/2020 and UIDP/00297/2020 (NovaMath - Center for Mathematics and Applications).}
}


\maketitle

\renewcommand{\thefootnote}{}

\footnote{2020 \emph{Mathematics Subject Classification}: 05C38, 20M10, 20M20, 05C25}

\footnote{\emph{Key words}: Graph endomorphisms, Cycle graphs, Generators, Rank}

\renewcommand{\thefootnote}{\arabic{footnote}}
\setcounter{footnote}{0}

\vspace*{-4em}
\begin{abstract}
In this paper we consider endomorphisms of an undirected cycle graph from Semigroup Theory perspective. 
Our main aim is to present a process to determine sets of generators with minimal cardinality 
for the monoids $\wEnd C_n$ and $\End C_n$ of all weak endomorphisms and all endomorphisms of an undirected cycle graph $C_n$ with $n$ vertices. 
We also describe Green's relations and regularity of these monoids and calculate their cardinalities. 
\end{abstract}

\section*{Introduction}

It is well known that sets of all endomorphisms of a certain type of a graph constitute submonoids of the monoid of transformations on its vertices. 
They are generalizations of the automorphism group of the graph. The aim of the research in this line is to establish the relationship between graph theory and semigroup theory. Monoids of endomorphisms of graphs have valuable applications, many of which are related to automata theory (see \cite{Kelarev:2003}).  
In recent years, many authors have paid attention to endomorphism monoids of graphs and a large number of interesting results concerning graphs and algebraic properties of their endomorphism monoids have been obtained (see, for example, \cite{Bottcher&Knauer:1992,Fan:1996,Fan:1997,Gu&Hou:2016,Hou&Gu:2016,Hou&Gu&Shang:2014,Hou&Luo&Fan:2012,Hou&Song&Gu:2017,Kelarev&Praeger:2003,Knauer&Wanichsombat:2014, Li:2003,Wilkeit:1996}).
In 1987, Knauer and Wilkeit questioned for which graphs $G$, the endomorphism monoid of $G$ is regular \cite{Marki:1988}. 
After this question was posed, many special graphs and their endomorphism monoids were studied. 
For instance, the connected bipartite graphs with regular endomorphism monoid were completely described by Wilkeit in 1996 \cite{Wilkeit:1996}. 
Although many results along these lines have been obtained (see some of the papers mentioned above), 
a characterization of all graphs with a regular endomorphism monoid seems to be very complicated. 
However, for weak endomorphisms, Wilkeit, still in the paper \cite{Wilkeit:1996}, also characterized all graphs with a regular weak endomorphism monoid. 

\smallskip 

Let $M$ be a monoid. The rank of $M$  
is defined to be the minimum size of a generating set of $M$, i.e. 
the minimum of the set $\{|X|\mid \mbox{$X\subseteq M$ and $X$ generates $M$}\}$. 

Over the last forty years, ranks of various well known semigroups have been calculated (see, for example, 
\cite{Araujo&al:2015,Fernandes:2000,Fernandes:2001,Fernandes&Sanwong:2014,Gomes&Howie:1987,Gomes&Howie:1992}). 
In 2020, the authors characterized the endomorphisms and weak endomorphisms of a finite undirected path $P_n$ with $n$ vertices; the ranks of its endomorphism monoids and weak endomorphism monoids were determined in \cite{DFKQ:2020}. Later, in \cite{DFKQ:2021}, they determined the ranks of the monoids of all injective partial endomorphisms and of all partial automorphisms of $P_n$, described their Green's relations and calculated the cardinalities of these two monoids.

\smallskip 

The main aim of this paper is to build a process to determine the ranks of the monoids of all weak endomorphisms and all endomorphisms of an undirected cycle graph $C_n$ with $n$ vertices. We also describe Green's relations and the regularity of these monoids and calculate their cardinalities. Monoids of strong endomorphisms and of strong weak endomorphisms of $C_n$ are also considered in this paper.

\smallskip 

Let $G=(V,E)$ be a simple graph (i.e. undirected, without loops or multiple edges). 
Let $\alpha$ be a full transformation of $V$. We say that $\alpha$ is: 
\begin{itemize}
\item[--] an \textit{endomorphism} of $G$ if $\{u,v\}\in E$ implies  $\{u\alpha,v\alpha\}\in E$, for all $u,v\in V$;
\item[--] a \textit{weak endomorphism} of $G$ if $\{u,v\}\in E$ and $u\alpha\ne v\alpha$ imply  $\{u\alpha,v\alpha\}\in E$, for all $u,v\in V$;
\item[--] a \textit{strong endomorphism} of $G$ if $\{u,v\}\in E$ if and only if  $\{u\alpha,v\alpha\}\in E$, for all $u,v\in V$;
\item[--] a \textit{strong weak endomorphism} of $G$ if $\{u,v\}\in E$ and $u\alpha\ne v\alpha$ if and only if $\{u\alpha,v\alpha\}\in E$, for all $u,v\in V$;
\item[--] an \textit{automorphism} of $G$ if $\alpha$ is a bijective strong endomorphism (i.e. $\alpha$ is bijective and $\alpha$ and $\alpha^{-1}$ are both endomorphisms). 
For finite graphs, any bijective week endomorphism is an automorphism. 
\end{itemize}

Denote by:
\begin{itemize}
\item[--] $\End(G)$ the set of all endomorphisms of $G$;
\item[--] $\wEnd(G)$ the set of all weak endomorphisms of $G$;
\item[--] $\sEnd(G)$ the set of all strong endomorphisms of $G$;
\item[--] $\swEnd(G)$ the set of all strong weak endomorphisms of $G$;
\item[--] $\Aut(G)$ the set of all automorphisms of $G$. 
\end{itemize}

Clearly, $\End(G)$, $\wEnd(G)$, $\sEnd(G)$, $\swEnd(G)$ and $\Aut(G)$ are monoids under composition of maps. Moreover, $\Aut(G)$ is also a group.  
It is also clear that $\Aut(G)\subseteq\sEnd(G)\subseteq\End(G)\subseteq\wEnd(G)$ and $\Aut(G)\subseteq\sEnd(G)\subseteq\swEnd(G)\subseteq\wEnd(G)$
\begin{center}
\begin{tikzpicture}[scale=0.5]
\draw (0,0) node{$\bullet$} (0,1) node{$\bullet$} (-1,2) node{$\bullet$} (1,2) node{$\bullet$} (0,3) node{$\bullet$}; 
\draw (1.5,0.05) node{\small$\Aut(G)$} (1.7,1.05) node{\small$\sEnd(G)$} (-2.8,2.05) node{\small$\swEnd(G)$} (2.5,2.05) node{\small$\End(G)$} (1.7,3.15) node{\small$\wEnd(G)$};
\draw[thick] (0,0) -- (0,1); 
\draw[thick] (0,1) -- (-1,2); 
\draw[thick] (0,1) -- (1,2); 
\draw[thick] (-1,2) -- (0,3);
\draw[thick] (1,2) -- (0,3);
\end{tikzpicture}
\end{center}
(these inclusions may not be strict).

\smallskip 

For any positive integer $n$, let $\Omega_n=\{1,2,\ldots,n\}$ and 
denote by $\mathscr{T}_n$ the monoid of all transformations of $\Omega_n$ under composition of maps.
Also, denote by $\mathscr{S}_n$ the symmetric group on $\Omega_n$, i.e. the subgroup of $\mathscr{T}_n$ of all permutations of $\Omega_n$, 
and by $\mathscr{D}_{2n}$ a dihedral group of order $2n$, i.e. 
$\mathscr{D}_{2n}=\langle g,h\mid g^n=h^2=1, gh=hg^{n-1}\rangle$. 
Recall that, for $n\geqslant3$, we can consider $\mathscr{D}_{2n}$ as a subgroup of $\mathscr{S}_n$. In fact, for instance, taking 
$$
g=\begin{pmatrix} 
1&2&\cdots&n-1&n\\
2&3&\cdots&n&1
\end{pmatrix}, 
h=\begin{pmatrix} 
1&2&\cdots&n-1&n\\
n&n-1&\cdots&2&1
\end{pmatrix}\in \mathscr{S}_n, 
$$ 
we have 
$
\mathscr{D}_{2n}=\langle g,h\rangle=\{1,g,g^2,\ldots,g^{n-1}, h,hg,hg^2,\ldots,hg^{n-1}\}. 
$ 
In particular,  $\mathscr{D}_{2n}$ is a rank two semigroup, monoid and group. 

Observe that
$$
g^k=\begin{pmatrix} 
1&2&\cdots&n-k&n-k+1&\cdots&n\\
1+k&2+k&\cdots&n&1&\cdots&k
\end{pmatrix}, 
\quad\text{i.e.}\quad  
ig^k=\left\{\begin{array}{ll}
i+k & \mbox{if $1\leqslant i\leqslant n-k$}\\
i+k-n & \mbox{if $n-k+1\leqslant i\leqslant n$,}  
\end{array}\right.
$$
and 
$$
hg^k=\begin{pmatrix} 
1&\cdots&k&k+1&\cdots&n\\
k&\cdots&1&n&\cdots&k+1
\end{pmatrix}, 
\quad\text{i.e.}\quad  
ihg^k=\left\{\begin{array}{ll}
k-i+1 & \mbox{if $1\leqslant i\leqslant k$}\\
n+k-i+1 & \mbox{if $k+1\leqslant i\leqslant n$,} 
\end{array}\right.
$$
for $0\leqslant k\leqslant n-1$. 

\smallskip

Let $n\geqslant3$ be a positive integer. 
Denote by $C_n$ a \textit{cycle graph} with $n$ vertices and fix 
$$
C_n=(\Omega_n,\{\{i,i+1\}\mid i=1,2,\ldots,n-1\}\cup\{\{1,n\}\}).
$$ 
\begin{center}
\begin{tikzpicture}
\draw (0,0) node{$\bullet$} (0,2) node{$\bullet$} (-1,1) node{$\bullet$} (1,1) node{$\bullet$}; 
\draw (0.7,0.3) node{$\bullet$} (0.7,1.7) node{$\bullet$} (-0.7,1.7) node{$\bullet$}; 
\draw (0,-0.2) node{$\scriptstyle5$} (0,2.25) node{$\scriptstyle1$} (-1.4,1) node{$\scriptstyle n-1$} (1.18,1) node{$\scriptstyle3$}; 
\draw (0.9,0.3) node{$\scriptstyle4$} (0.95,1.7) node{$\scriptstyle2$} (-0.95,1.7) node{$\scriptstyle n$}; 
\draw[thick] (0,0) arc (-90:180:1);
\draw[thin,dashed] (0,0) arc (-90:-180:1);
\end{tikzpicture}
\end{center}

For convenience, on several occasions throughout the text, we will take addition modulo $n$ with $\{1,2,\dots,n\}$ as set of representatives. 
For instance, when $n$ is considered as vertex of $C_n$, the expression $n+1$ also denotes the vertex $1$ of $C_n$ and so, 
in this line, we can write $C_n=(\Omega_n,\{\{i,i+1\}\mid i=1,2,\ldots,n\})$.

\smallskip 

As usual, for $1\leqslant i\leqslant j\leqslant n$, we denote by $[i,j]$ the interval $\{i,i+1,\ldots,j\}$ of $\Omega_n$.  
To an interval of $\Omega_n$ or to an union of intervals of $\Omega_n$ of the form 
$[j,n]\cup[1,i]$, for some $1\leqslant i< j-1\leqslant n-1$, we call an \textit{arc} of $\Omega_n$. 

\smallskip

Throughout this paper we always consider $n\geqslant3$.

\smallskip

This paper is organized as follows: in Section \ref{basics}, we give descriptions of the various types of endomorphisms of $C_n$, 
\textit{completely} characterize $\Aut(C_n)$, $\sEnd(C_n)$ and $\swEnd(C_n)$, determine the cardinalities of $\End(C_n)$ and $\wEnd(C_n)$ and state some basic auxiliary results; Section \ref{regularity} is dedicated to characterize the regular elements of $\End(C_n)$ and $\wEnd(C_n)$, 
as well as to describe Green's relations $\mathcal{R}$, $\mathcal{L}$ and $\mathcal{D}$ on these two monoids; finally, in Section \ref{ranksec}, we exhibit a process to determine sets of generators with minimal cardinality for the monoids $\wEnd C_n$ and $\End C_n$. 

\smallskip 

For general background on Semigroup Theory and standard notations, we refer to Howie's book \cite{Howie:1995}. 
Regarding Algebraic Graph Theory, we refer to Knauer's book \cite{Knauer:2011}.

\smallskip 

We would like to point out that we made massive use of computational tools, namely GAP \cite{GAP4}.

\section{Some basic properties}\label{basics} 

We start this section by observing that, for $1\leqslant i,j\leqslant n$, we have that $\{i,j\}$ is an edge of $C_n$ if and only if $|i-j|\in\{1,n-1\}$. 
In particular, if $1\leqslant i\leqslant j\leqslant n$ then $\{i,j\}$ is an edge of $C_n$ if and only if $j-i\in\{1,n-1\}$. 
It is also a routine matter to show the following properties: 

\begin{proposition}\label{chars}
Let $\alpha\in \mathscr{T}_n$. Then: 
\begin{enumerate}
\item $\alpha\in\wEnd(C_n)$ if and only if $|(i+1)\alpha-i\alpha|\in \{0,1,n-1\}$ for all $1\leqslant i\leqslant n$; 
\item $\alpha\in\End(C_n)$ if and only if $|(i+1)\alpha-i\alpha|\in \{1,n-1\}$ for all $1\leqslant i\leqslant n$; 
\item $\alpha\in\swEnd(C_n)$ if and only if 
\begin{enumerate}
\item $|(i+1)\alpha-i\alpha|\in \{0,1,n-1\}$ for all $1\leqslant i\leqslant n$, and 
\item $|j\alpha-i\alpha|\in\{1,n-1\}$ implies $j-i\in\{1,n-1\}$ for all $1\leqslant i\leqslant j\leqslant n$; 
\end{enumerate} 
\item $\alpha\in\sEnd(C_n)$ if and only if 
\begin{enumerate}
\item $|(i+1)\alpha-i\alpha|\in \{1,n-1\}$ for all $1\leqslant i\leqslant n$, and 
\item $|j\alpha-i\alpha|\in\{1,n-1\}$ implies $j-i\in\{1,n-1\}$ for all $1\leqslant i\leqslant j\leqslant n$. 
\end{enumerate} 
\end{enumerate}
\end{proposition}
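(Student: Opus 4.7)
The plan is to translate each definition directly, leaning on the edge criterion recorded immediately before the statement: for $1\leqslant i,j\leqslant n$, the pair $\{i,j\}$ is an edge of $C_n$ if and only if $|i-j|\in\{1,n-1\}$. Since $E(C_n)=\{\{i,i+1\}\mid 1\leqslant i\leqslant n\}$ (with the convention $n+1=1$), quantifying over edges of $C_n$ reduces to quantifying over $i\in\{1,\ldots,n\}$, which is exactly the index set that appears in condition (a) of each item.

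For (1), I would observe that $\alpha\in\wEnd(C_n)$ asks that, for every $i$, either $(i+1)\alpha=i\alpha$ or $\{i\alpha,(i+1)\alpha\}$ is an edge of $C_n$. Via the edge criterion these two alternatives correspond respectively to $|(i+1)\alpha-i\alpha|=0$ and $|(i+1)\alpha-i\alpha|\in\{1,n-1\}$, and their disjunction gives the stated membership in $\{0,1,n-1\}$. Part (2) is the identical translation with the escape clause $(i+1)\alpha=i\alpha$ removed, so the difference must lie in $\{1,n-1\}$.

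For (3) and (4), I would isolate the extra ``only if'' half in the definitions of $\swEnd$ and $\sEnd$, which in both cases reads $\{u\alpha,v\alpha\}\in E\Rightarrow\{u,v\}\in E$ (the companion condition $u\alpha\ne v\alpha$ being automatic, since $C_n$ has no loops). Translating through the edge criterion and restricting without loss of generality to $1\leqslant i\leqslant j\leqslant n$ (the diagonal case $i=j$ being vacuous, as $|i\alpha-j\alpha|=0$), this becomes precisely condition (b): $|j\alpha-i\alpha|\in\{1,n-1\}$ forces $j-i\in\{1,n-1\}$. Combining (b) with part (1) yields (3), and with part (2) yields (4).

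I do not expect any serious obstacle; the whole argument is a routine unfolding of the definitions. The only point requiring a little care is the wrap-around edge $\{1,n\}$: it is correctly absorbed by the modular-addition convention for $(i+1)\alpha$ in part (a) and by the inclusion of $n-1$ alongside $1$ in each difference set (since $|1-n|=n-1$), so that the four characterizations treat the cyclic edge on exactly the same footing as the ``interior'' edges $\{i,i+1\}$ with $1\leqslant i\leqslant n-1$.
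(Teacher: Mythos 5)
Your proposal is correct: it is the routine unfolding of the definitions via the edge criterion $\{i,j\}\in E(C_n)\Leftrightarrow|i-j|\in\{1,n-1\}$ that the paper itself has in mind (the paper omits the proof, calling it "a routine matter"). You correctly handle the only delicate points — the wrap-around edge $\{1,n\}$, the vacuity of the loop/diagonal cases, and the reduction of the "only if" halves of the strong conditions to condition (b).
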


Notice that $C_3$ is also the complete graph on $3$ vertices. 
Therefore, it is clear that $\End(C_3)=\sEnd(C_3)=\Aut(C_3)=\mathscr{S}_3=\mathscr{D}_{2\times3}$ and $\swEnd(C_3)=\wEnd(C_3)=\mathscr{T}_3$. 

\begin{proposition}\label{aut}
For $n\geqslant3$, $\Aut(C_n) = \mathscr{D}_{2n}$. 
\end{proposition}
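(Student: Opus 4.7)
\medskip\noindent\textbf{Proof proposal.}
The plan is to prove the two inclusions $\mathscr{D}_{2n}\subseteq\Aut(C_n)$ and $\Aut(C_n)\subseteq\mathscr{D}_{2n}$ separately, using Proposition \ref{chars}(2) together with the characterization of the edges of $C_n$ recalled at the start of the section: $\{i,j\}$ is an edge of $C_n$ if and only if $|i-j|\in\{1,n-1\}$.

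For the inclusion $\mathscr{D}_{2n}\subseteq\Aut(C_n)$, I would simply check that each of the $2n$ bijections $g^k$ and $hg^k$ ($0\leqslant k\leqslant n-1$) already exhibited explicitly in the introduction preserves adjacency. From the given formulas, $|(i+1)g^k-ig^k|\in\{1,n-1\}$ and $|(i+1)hg^k-ihg^k|\in\{1,n-1\}$ for all $i$ (with indices read modulo $n$), so both are endomorphisms by Proposition \ref{chars}(2); since they are bijections and their inverses are also of the same form (as $\mathscr{D}_{2n}$ is a group), they lie in $\Aut(C_n)$.

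For the converse inclusion, let $\alpha\in\Aut(C_n)$. As $\alpha$ is in particular an endomorphism, Proposition \ref{chars}(2) gives $(i+1)\alpha-i\alpha\equiv \varepsilon_i\pmod{n}$ with $\varepsilon_i\in\{+1,-1\}$, for every $i\in\{1,\dots,n\}$ (interpreting indices cyclically). The key step is to show that all the signs $\varepsilon_i$ coincide. Suppose, for a contradiction, that $\varepsilon_i\ne\varepsilon_{i+1}$ for some $i$; then
\[
(i+2)\alpha \equiv (i+1)\alpha+\varepsilon_{i+1} \equiv i\alpha + \varepsilon_i+\varepsilon_{i+1} \equiv i\alpha \pmod{n},
\]
and since both $(i+2)\alpha$ and $i\alpha$ lie in $\Omega_n$ they are actually equal, contradicting the injectivity of $\alpha$ (recall $n\geqslant 3$). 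Hence all $\varepsilon_i$ are equal, say to $\varepsilon\in\{+1,-1\}$, and by induction $i\alpha\equiv 1\alpha+\varepsilon(i-1)\pmod n$ for all $i$. Setting $k=1\alpha-1$ if $\varepsilon=+1$, we recover $\alpha=g^{k}$; setting $k=1\alpha$ if $\varepsilon=-1$, comparison with the formula $ihg^k=k-i+1$ (modulo $n$) shows that $\alpha=hg^{k}$. In either case $\alpha\in\mathscr{D}_{2n}$, finishing the argument.

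The only genuinely non-routine step is the sign-constancy argument in the second inclusion; everything else is a direct verification from the edge description and the formulas for $g^k$ and $hg^k$ already available in the preliminaries. Note that the proof also gives the standard count $|\Aut(C_n)|=2n$ as a by-product, matching $|\mathscr{D}_{2n}|$.
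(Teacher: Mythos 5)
Your proof is correct, but it follows a genuinely different route from the paper's. The paper first reduces to the case $n\alpha=n$ (by composing with a suitable power of $g$), then observes that $1\alpha\in\{1,n-1\}$ because $\{1\alpha,n\}$ must be an edge, and in each of the two branches determines every image by a minimal-counterexample argument along the cycle, concluding $\alpha=1$ or $\alpha=hg^{n-1}$. You instead encode the endomorphism condition of Proposition~\ref{chars}(2) as a sequence of increments $\varepsilon_i\in\{+1,-1\}$ modulo $n$ (legitimate, since $1\not\equiv-1\pmod n$ for $n\geqslant3$) and use injectivity to force all signs to coincide: a sign change at $i$ gives $(i+2)\alpha=i\alpha$ with $i+2\not\equiv i$, a contradiction. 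What your approach buys is a shorter, more uniform argument that avoids the normalization and the two-branch case analysis, and it is very much in the spirit of the increment bookkeeping the authors themselves use later in the proof of Proposition~\ref{sizes}; what the paper's approach buys is that it works directly from the edge-preservation definition without invoking Proposition~\ref{chars}, and its normalization trick ($\alpha g^{n-n\alpha}$ fixes $n$) is reused implicitly elsewhere. Both proofs are complete; your identifications $\alpha=g^{1\alpha-1}$ (for $\varepsilon=+1$) and $\alpha=hg^{1\alpha}$ (for $\varepsilon=-1$) match the explicit formulas for $ig^k$ and $ihg^k$ given in the introduction.
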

\begin{proof}
We begin by observing that, clearly, $g,h\in\Aut(C_n)$. Hence $\mathscr{D}_{2n}\subseteq\Aut(C_n)$. 

Conversely, let us first take $\alpha\in\Aut(C_n)$ such that $n\alpha=n$. 

Since $\{1,n\}$ is an edge of $C_n$ then $\{1\alpha,n\}=\{1\alpha,n\alpha\}$ is an edge of $C_n$ and so $1\alpha=1$ or $1\alpha=n-1$. 

If $1\alpha=1$ then $\alpha=1$. In fact, $\{1,2\alpha\}=\{1\alpha,2\alpha\}$ is an edge of $C_n$ and so, as $n\alpha=n$, we must have $2\alpha=2$. 
Now, if $\alpha$ were not the identity then we could take the smallest $2<i<n$ such that $i\alpha\neq i$. In this case, as $\{i-1,i\alpha\}=\{(i-1)\alpha,i\alpha\}$ 
 is an edge of $C_n$ and $i\alpha\neq i$, we could only have $i\alpha=i-2=(i-2)\alpha$, which is a contradiction.  

 Suppose that $1\alpha=n-1$. Then $\{n-1,2\alpha\}=\{1\alpha,2\alpha\}$ is an edge of $C_n$ and so, as $n\alpha=n$, we must have $2\alpha=n-2$. 
 Admit there exists $2<j<n$ such that $j\alpha\neq n-j$ and take the smallest one. 
 Then $\{n-j+1,j\alpha\}=\{(j-1)\alpha,j\alpha\}$ is an edge of $C_n$ and so $j\alpha=n-j+2=(j-2)\alpha$, which is a contradiction.  
 Hence $j\alpha=n-j$, for $1\leqslant j<n$, i.e. $\alpha=hg^{n-1}$. 
 
Now, let $\alpha$ be an arbitrary element of $\Aut(C_n)$. Then $\alpha g^{n-n\alpha}\in\Aut(C_n)$ and $n\alpha g^{n-n\alpha}=n$. 
Hence, in view of what we proved above, $\alpha g^{n-n\alpha}=1$ or $\alpha g^{n-n\alpha}=hg^{n-1}$ and so $\alpha=g^{n\alpha}$ or $\alpha=hg^{n\alpha-1}$. 
Thus, in both cases, we obtain $\alpha\in\mathscr{D}_{2n}$, as required.  
\end{proof}

For $n\geqslant3$, it follows that $|\Aut(C_n)| = 2n$.  

\begin{proposition}\label{send}
For $n=3$ and $n\geqslant5$, $\sEnd(C_n) = \Aut(C_n)$. 
\end{proposition}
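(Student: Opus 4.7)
The case $n=3$ was already observed immediately before Proposition~\ref{aut}, where $\sEnd(C_3)=\Aut(C_3)=\mathscr{S}_3$. Since $\Aut(C_n)\subseteq\sEnd(C_n)$ always holds, for $n\geqslant 5$ the task reduces to establishing the reverse inclusion $\sEnd(C_n)\subseteq\Aut(C_n)=\mathscr{D}_{2n}$.

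The plan is to encode any $\alpha\in\sEnd(C_n)$ by the walk it traces around $C_n$. By Proposition~\ref{chars}(4)(a), for each $i$ the difference $(i+1)\alpha-i\alpha$ lies in $\{\pm 1,\pm(n-1)\}$; letting $\epsilon_i\in\{+1,-1\}$ denote the sign of this step read modulo $n$, the map $\alpha$ is determined by $1\alpha$ and the sequence $\epsilon_1,\ldots,\epsilon_n$ via $(i+1)\alpha\equiv i\alpha+\epsilon_i\pmod{n}$. The heart of the argument is to show that this sign sequence is constant; once this is done, $\alpha$ is either a rotation, hence $\alpha=g^{1\alpha-1}$, or a reverse rotation, hence of the form $hg^{k}$ for a suitable $k$, and in both cases lies in $\mathscr{D}_{2n}$.

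To prove constancy I argue by contradiction. Suppose two consecutive signs disagree, say $\epsilon_i=+1$ and $\epsilon_{i+1}=-1$ (the symmetric case is analogous). Then $(i+2)\alpha\equiv(i+1)\alpha-1\equiv i\alpha\pmod{n}$, so that condition (4)(a) applied to the next step yields
\[
|(i+3)\alpha-i\alpha|=|(i+3)\alpha-(i+2)\alpha|\in\{1,n-1\}.
\]
Applying Proposition~\ref{chars}(4)(b), after reducing indices modulo $n$ to fit the hypothesis $1\leqslant i\leqslant j\leqslant n$ (which at worst replaces the gap $3$ by $n-3$), this forces $3\in\{1,n-1\}$ or $n-3\in\{1,n-1\}$, i.e.\ $n\in\{2,4\}$, contradicting $n\geqslant 5$.

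The only delicate point in the proof is the index bookkeeping at the wrap-around position of the cycle, which is handled by the reduction just described; the final identification of the two monotone maps with elements of $\mathscr{D}_{2n}$ is immediate from the formulas for $g^k$ and $hg^k$ recorded earlier in the paper.
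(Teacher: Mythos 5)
Your proof is correct. It is a mild reorganization of the paper's argument rather than a truly different one: the paper supposes $\alpha$ is not injective, uses condition (4)(b) once to show that any collision $i\alpha=j\alpha$ must occur at distance $j-i=2$, and then applies (4)(b) a second time to a neighbour at distance $3$ (using $(i-1,i+2)$ or $(i,i+3)$ according to which stays inside $[1,n]$, which sidesteps any modular reduction) to force $3\in\{1,n-1\}$, i.e.\ $n=4$. You instead prove the stronger structural statement that the step-sign sequence is constant; a sign change hands you the distance-$2$ collision $(i+2)\alpha=i\alpha$ for free, after which your contradiction via the pair $\{i,i+3\}$ is exactly the paper's second application of (4)(b). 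What your version buys is that the conclusion $\alpha\in\mathscr{D}_{2n}$ falls out explicitly as $\alpha=g^{1\alpha-1}$ or $\alpha=hg^{1\alpha}$, with no need to invoke the general fact that an injective endomorphism of a finite graph is an automorphism; what it costs is the modular index bookkeeping at the wrap-around, which you correctly flag and resolve (the gap $3$ becomes $n-3$, and both force $n=4$). Your delegation of $n=3$ to the earlier observation $\sEnd(C_3)=\mathscr{S}_3=\Aut(C_3)$ is legitimate and in fact necessary for your argument, since for $n=3$ the vertices $i$ and $i+3$ coincide.
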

\begin{proof} 
We will use Proposition \ref{chars} several times without further mention. 

Let $\alpha\in\sEnd(C_n)$. Suppose, by contradiction, there exist $1\leqslant i<j\leqslant n$ such $i\alpha=j\alpha$. 
Then 
$
|(i+1)\alpha-j\alpha|=|(i+1)\alpha-i\alpha|\in\{1,n-1\}
$ 
(since in particular $\alpha\in\End(C_n)$) and so, as $i+1\leqslant j$, we have $j-(i+1)\in\{1,n-1\}$. 
Since $j<i+n$, it follows that $j=i+2$. 

If $i=1$ and $i+2=n$ we would have $|1\alpha-n\alpha|=|i\alpha-j\alpha|=0$, which is a contradiction. 
Therefore $i>1$ or $i+2<n$. 
If $i>1$ then 
$
|(i+2)\alpha-(i-1)\alpha|=|j\alpha-(i-1)\alpha|=|i\alpha-(i-1)\alpha|\in\{1,n-1\}
$
and so $3=(i+2)-(i-1)\in\{1,n-1\}$, which is a contradiction since $n\neq4$. 
If $i+2<n$ then 
$
|(i+3)\alpha-i\alpha|=|(i+3)\alpha-j\alpha|=|(i+3)\alpha-(i+2)\alpha|\in\{1,n-1\}
$
and so $3=(i+3)-i\in\{1,n-1\}$, which again is a contradiction since $n\neq4$. 

Thus, $\alpha$ is injective and so $\alpha\in\Aut(C_n)$, as required. 
\end{proof}

It follows that $|\sEnd(C_n)| = 2n$, for $n=3$ and $n\geqslant5$. 

For $n=4$, by inspection of the elements of $\mathscr{T}_4$, we can show that $|\sEnd(C_4)|=32$, whence $\sEnd(C_4)$ has $24$ non-automorphism elements, 
including $16$ with rank $3$ and $8$ with rank $2$. Moreover, it is a routine matter to verify (for instance, using GAP \cite{GAP4}) that $\{g,h,u\}$, with $u$ any one of the $16$ elements of $\sEnd(C_4)$ with rank $3$,  forms a minimum size generating set of $\sEnd(C_4)$, whence $\sEnd(C_4)$ has rank three. 
Furthermore, we can also verify that $\End(C_4)=\sEnd(C_4)$. 

This last equality is also valid for all odd positive integer $n$ greater than or equal to three, as we will show in Proposition \ref{endodd}. 
Before that, we will prove some lemmas. 

\begin{lemma} \label{ims}
Let $1\leqslant i\leqslant j\leqslant n$ and $\alpha\in\wEnd(C_n)$. Then, there exist $1\leqslant a\leqslant b\leqslant n$ such that 
$[i,j]\alpha=[a,b]$ or $[i,j]\alpha=[b,n]\cup[1,a]$. 
\end{lemma}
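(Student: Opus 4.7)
My plan is to induct on $j - i$. The intuition is that the sequence $i\alpha, (i+1)\alpha, (i+2)\alpha, \ldots, j\alpha$ describes a walk on the cycle $C_n$ in which, by Proposition \ref{chars}(1), each successive step either stays put ($|(k+1)\alpha - k\alpha| = 0$) or moves to a cycle-neighbour ($|(k+1)\alpha - k\alpha| \in \{1, n-1\}$). The set of vertices of $C_n$ visited by such a walk should clearly be ``cyclically contiguous'', which is precisely the notion of an arc.

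For the base case $j = i$ the set $[i,i]\alpha = \{i\alpha\}$ is the interval $[i\alpha, i\alpha]$. For the inductive step, assume $[i, j-1]\alpha$ is an arc $A$; write $[i,j]\alpha = A \cup \{j\alpha\}$. By Proposition \ref{chars}(1) applied to the pair $j-1, j$, we have $j\alpha = (j-1)\alpha$ or $j\alpha$ is adjacent to $(j-1)\alpha$ in $C_n$ (treating indices modulo $n$). In the first case $j\alpha \in A$ and $[i,j]\alpha = A$ is already an arc. In the second case, if $j\alpha \in A$ there is again nothing to prove, so assume $j\alpha \notin A$. Then $(j-1)\alpha$ must be one of the two ``boundary'' vertices of $A$ (the vertices of $A$ having a cycle-neighbour outside $A$), and $j\alpha$ is that external cycle-neighbour.

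It remains to verify that adding such an external neighbour to an arc produces an arc. This amounts to a short case analysis on the form of $A$: if $A = [a,b]$ is an interval, then appending $b+1$ yields either $[a,b+1]$ or, when $b = n$, the set $[a,n] \cup \{1\}$ (which is either the arc $[a,n] \cup [1,1]$ when $a > 2$, or the interval $\Omega_n$ when $a = 2$); the analogous argument works for appending $a-1$. If $A = [b,n] \cup [1,a]$ with $a < b-1$, then appending $a+1$ yields $[b,n] \cup [1,a+1]$, which is an arc if $a+1 < b-1$ and equals $\Omega_n$ if $a + 1 = b - 1$; similarly for appending $b - 1$.

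The proof is essentially routine; the main (minor) obstacle is keeping the case analysis clean, in particular handling the degenerate situations in which an arc extended by one vertex ``closes up'' to the whole of $\Omega_n$, or in which an interval, when extended at an endpoint that is $1$ or $n$, wraps around into an arc of the second form. None of this presents a real difficulty, and the induction goes through cleanly.
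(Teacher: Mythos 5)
Your proposal is correct and follows essentially the same route as the paper: induction on $j-i$, using Proposition \ref{chars}(1) to see that the new image $j\alpha$ either repeats $(j-1)\alpha$ or is a cycle-neighbour of it, followed by a case analysis showing that an arc extended by an external neighbour of a boundary vertex is still an arc. The paper simply carries out the case analysis more explicitly, splitting on the five possible values of $(j\alpha)-(j-1)\alpha$ and on the two forms of the arc.
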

\begin{proof}
Let $k=j-i$. We will proceed by induction on $k$.

If $k=0$ then $[i,j]\alpha=\{i\alpha\}=[i\alpha,i\alpha]$, which satisfies the statement of the lemma.

Now, suppose that the lemma is valid for some $k\geqslant0$ and consider the interval $[i,i+k+1]$. 
Then, there exist $1\leqslant a\leqslant b\leqslant n$ such that 
$[i,i+k]\alpha=[a,b]$ (Case 1) or $[i,i+k]\alpha=[b,n]\cup[1,a]$ (Case 2). 
Observe that $[i,i+k+1]\alpha=[i,i+k]\alpha\cup\{(i+k+1)\alpha\}$. 

By Proposition \ref{chars}, $|(i+k+1)\alpha-(i+k)\alpha|\in\{0,1,n-1\}$, whence $(i+k+1)\alpha=(i+k)\alpha+t$, for some $t\in\{1-n,-1,0,1,n-1\}$. 
If $t=0$ then $(i+k+1)\alpha=(i+k)\alpha$ and so $[i,i+k+1]\alpha=[i,i+k]\alpha$. Thus, we can suppose that $t\neq0$.  

Let $c=(i+k)\alpha$. 

\smallskip 

First, we admit Case 1. 

Suppose that $t=1$. Then $(i+k+1)\alpha=(i+k)\alpha+1=c+1$. Since $c\in[a,b]$, we have $a\leqslant c<b$ or $c=b$. 
If $a\leqslant c<b$ then $a<c+1\leqslant b$, whence $(i+k+1)\alpha=c+1\in[a,b]$ and so $[i,i+k+1]\alpha=[a,b]\cup\{c+1\}=[a,b]$. 
On the other hand, if $c=b$ then $[i,i+k+1]\alpha=[a,b]\cup\{b+1\}=[a,b+1]$. 

Next, suppose that $t=-1$. Then $(i+k+1)\alpha=(i+k)\alpha-1=c-1$. Since $c\in[a,b]$, we have $a<c\leqslant b$ or $c=a$. 
If $a<c\leqslant b$ then $a\leqslant c-1< b$, whence $(i+k+1)\alpha=c-1\in[a,b]$ and so $[i,i+k+1]\alpha=[a,b]\cup\{c-1\}=[a,b]$. 
On the other hand, if $c=a$ then $[i,i+k+1]\alpha=[a,b]\cup\{a-1\}=[a-1,b]$. 

If $t=n-1$ then $(i+k+1)\alpha=(i+k)\alpha+n-1=c+n-1$, whence $(i+k+1)\alpha=n$ and $c=1$ and so $1\in[a,b]$, 
which implies that $a=1$ and so 
$[i,i+k+1]\alpha=[a,b]\cup\{n\}=[1,b]\cup\{n\}=[n,n]\cup[1,b]$.  

If $t=1-n$ then $(i+k+1)\alpha=(i+k)\alpha+1-n=c+1-n$, whence $(i+k+1)\alpha=1$ and $c=n$ and so $n\in[a,b]$, 
which implies that $b=n$ and so 
$[i,i+k+1]\alpha=[a,b]\cup\{1\}=[a,n]\cup\{1\}=[a,n]\cup[1,1]$.  

\smallskip 

Now, admit Case 2. 

Suppose that $t=1$. Then $(i+k+1)\alpha=(i+k)\alpha+1=c+1$, with $c\in[b,n]\cup[1,a]$.  
If $c+1\in[b,n]\cup[1,a]$ then $[i,i+k+1]\alpha=[b,n]\cup[1,a]\cup\{c+1\}=[b,n]\cup[1,a]$.  
If $c+1\not\in[b,n]\cup[1,a]$ then $c=a$ (and $a<b$) and so $[i,i+k+1]\alpha=[b,n]\cup[1,a]\cup\{a+1\}=[b,n]\cup[1,a+1]$. 

Next, suppose that $t=-1$. Then $(i+k+1)\alpha=(i+k)\alpha+1=c-1$, with $c\in[b,n]\cup[1,a]$.  
If $c-1\in[b,n]\cup[1,a]$ then $[i,i+k+1]\alpha=[b,n]\cup[1,a]\cup\{c-1\}=[b,n]\cup[1,a]$.  
If $c-1\not\in[b,n]\cup[1,a]$ then $c=b$ (and $a<b$) and so $[i,i+k+1]\alpha=[b,n]\cup[1,a]\cup\{b-1\}=[b-1,n]\cup[1,a]$.

If $t=n-1$ then $(i+k+1)\alpha=(i+k)\alpha+n-1=c+n-1$, whence $(i+k+1)\alpha=n$ and $c=1$ and so 
$[i,i+k+1]\alpha=[b,n]\cup[1,a]\cup\{n\}=[b,n]\cup[1,a]$.  

Finally, if $t=1-n$ then $(i+k+1)\alpha=(i+k)\alpha+1-n=c+1-n$, whence $(i+k+1)\alpha=1$ and $c=n$ and so 
$[i,i+k+1]\alpha=[b,n]\cup[1,a]\cup\{1\}=[b,n]\cup[1,a]$, as required. 
\end{proof}

Let $\alpha\in\wEnd(C_n)$. Then, by the previous lemma, we have $\im(\alpha)=[a,b]$ or $\im(\alpha)=[b,n]\cup[1,a]$, for some $1\leqslant  a\leqslant  b\leqslant  n$. 
Thus, we get 
\begin{equation}\label{trans}
\im(\alpha g^{n-a+1})=[1,\rank(\alpha)],~\mbox{in the first case, and}~ 
\im(\alpha g^{n-b+1})=[1,\rank(\alpha)],~\mbox{in the second case.} 
\end{equation}
Notice that $\ker(\alpha)=\ker(\alpha\sigma)$, for any permutation $\sigma$ of $\Omega_n$. 

\begin{lemma}\label{wpar} 
Let $\alpha\in\wEnd(C_n)$. Then $\alpha\in\mathscr{D}_{2n}$ or $\rank(\alpha)\leqslant \lfloor\frac{n}{2}\rfloor+1$. 
Moreover, if $n$ is an even positive integer and $\rank(\alpha)=\frac{n}{2}+1$ then $\alpha\in\End(C_n)$. 
\end{lemma}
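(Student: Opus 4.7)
The plan is to model $\alpha$ as a closed walk of length $n$ on $\Omega_n$ whose consecutive steps lie in $\{-1,0,1\}$ (from Proposition \ref{chars}(1)), normalize via a rotation so that $\im(\alpha)$ becomes an initial segment $[1,r]$ of $\Omega_n$, and then bound $r$ by comparing the number of monotone steps of the walk to $n$.

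First I would handle the case $\rank(\alpha)=n$: then $\alpha$ is bijective, and since any bijective weak endomorphism of a finite graph is an automorphism (as noted in the introduction), Proposition \ref{aut} gives $\alpha\in\mathscr{D}_{2n}$. Otherwise set $r:=\rank(\alpha)<n$. By Lemma \ref{ims} and (\ref{trans}), there is some $k$ for which $\beta:=\alpha g^k\in\wEnd(C_n)$ satisfies $\im(\beta)=[1,r]$ and $\rank(\beta)=r$. Since $n\notin\im(\beta)$, no consecutive pair $(i\beta,(i+1)\beta)$ (cyclic indices, with $(n+1)\beta:=1\beta$) can involve both $1$ and $n$, so Proposition \ref{chars}(1) forces $(i+1)\beta-i\beta\in\{-1,0,1\}$ as an integer difference.

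Letting $p$, $q$, $s$ count the $+1$-, $-1$-, and $0$-steps of the closed sequence $1\beta,2\beta,\dots,n\beta,1\beta$, closedness in $\mathbb{Z}$ yields $p=q$ and $p+q+s=n$. Because a walk with steps in $\{-1,0,1\}$ visits every intermediate integer, $\im(\beta)=[m,M]$ where $m$ and $M$ are the walk's minimum and maximum, so $M-m=r-1$; travelling between the extrema requires at least $r-1$ monotone steps in each direction, hence $p=q\geq r-1$. Combining, $2(r-1)+s\leq 2p+s=n$, which gives $r\leq\lfloor n/2\rfloor+1$.

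For the moreover assertion, if $n$ is even and $r=n/2+1$, the inequality chain is tight, so $s=0$; then $|(i+1)\beta-i\beta|=1$ for every $i$, and Proposition \ref{chars}(2) yields $\beta\in\End(C_n)$. Since $\alpha=\beta g^{-k}$ and $\End(C_n)$ is closed under composition with automorphisms, $\alpha\in\End(C_n)$. The delicate point is the intermediate-value observation, together with the rotation normalization that lets us argue in $\mathbb{Z}$ rather than on the cycle; once these are in place, the rank bound reduces to elementary arithmetic.
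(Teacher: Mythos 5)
Your proof is correct, and it takes a genuinely different route from the paper's. Both arguments begin the same way: rule out the bijective case via $\Aut(C_n)=\mathscr{D}_{2n}$, then rotate by a power of $g$ (using Lemma \ref{ims} and (\ref{trans})) so that the image becomes an initial segment $[1,r]$ with $n\notin\im(\beta)$, which kills the $n-1$ jumps. From there the paper argues with arcs: it picks $i\in 1\beta^{-1}$ and $j\in r\beta^{-1}$, applies Lemma \ref{ims} to conclude that the arc between them maps onto all of $[1,r]$ while the complementary arc must still cover $[2,r-1]$, and extracts $r\leqslant j-i+1\leqslant n-r+2$. You instead count the $+1$-, $-1$- and $0$-steps of the closed length-$n$ walk $1\beta,\dots,n\beta,1\beta$ in $\mathbb{Z}$: closedness gives $p=q$, reaching both extremes gives $p,q\geqslant r-1$, and $p+q+s=n$ yields $2(r-1)\leqslant n$. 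The two counts are closely related (your $p\geqslant r-1$ is the paper's "the arc from a preimage of $1$ to a preimage of $r$ needs at least $r-1$ up-steps"), but your bookkeeping is global rather than arc-by-arc, and it pays off in the "moreover" part: tightness forces $s=0$, so every consecutive difference is $\pm1$ and Proposition \ref{chars}(2) applies immediately, whereas the paper has to exhibit the explicit tent-shaped form of $\beta$ to see that it lies in $\End(C_n)$. Your step-counting viewpoint is also the one the paper itself later adopts to compute $|\wEnd(C_n)|$ in Proposition \ref{sizes}, so it fits naturally into the surrounding development.
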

\begin{proof}
Let $k=\rank(\alpha)$ and suppose that $\alpha\not\in\mathscr{D}_{2n}$. Then $1\leqslant k\leqslant n-1$. 

Now, by (\ref{trans}), we may take $0\leqslant t\leqslant n-1$ such that $\im(\alpha g^t)=[1,k]$. Let $\beta=\alpha g^t$. 
Let $i\in 1\beta^{-1}$ and $j\in k\beta^{-1}$. 

Suppose that $i<j$. As $1, k\in [i,j]\beta\subseteq [1,k]$ and $n\not\in\im(\beta)$, by Lemma \ref{ims}, 
we deduce that  $[i,j]\beta=[1,k]=\im(\beta)$. 
On the other hand, we must have $[2,k-1]\subseteq [j+1,n]\beta\cup[1,i-1]\beta$. 
Hence, $j-i+1\geqslant k$ and $(n-(j+1)+1)+(i-1)\geqslant k-2$, i.e. $k\leqslant  j-i+1\leqslant n-k+2$. Thus $k\leqslant \lfloor\frac{n}{2}\rfloor+1$. 

In this case, if $n$ is even and $k=\frac{n}{2}+1$ then $j-i+1=\frac{n}{2}+1$ and we must have 
$$
\beta=\left(\begin{array}{ccc|ccccc|ccc}
1 &\cdots &i-1 & i & i+1 &\cdots &j-1&j &j+1& \cdots &n \\
 j-\frac{n}{2} & \cdots & 2 & 1 & 2 & \cdots & \frac{n}{2}& \frac{n}{2}+1 & \frac{n}{2} & \cdots & j-\frac{n}{2}+1
\end{array}\right)\in\End(C_n). 
$$

If $i>j$ then a reasoning similar to the first case allows us to also conclude that $k\leqslant \lfloor\frac{n}{2}\rfloor+1$. 
In this case, for an even $n$ and $k=\frac{n}{2}+1$, we get 
$$
\beta=\left(\begin{array}{ccc|ccccc|ccc}
1 &\cdots &j-1 & j & j+1 &\cdots &i-1&i &i+1& \cdots &n \\
 n-i+2 & \cdots & \frac{n}{2} & \frac{n}{2}+1 & \frac{n}{2} & \cdots & 2& 1 & 2 & \cdots &n-i+1
\end{array}\right)\in\End(C_n). 
$$

Thus, in both cases we obtain $k\leqslant \lfloor\frac{n}{2}\rfloor+1$ and, 
if $n$ is even and $k=\frac{n}{2}+1$, we have $\alpha=\beta g^{n-t}\in\End(C_n)$, as required. 
\end{proof} 

\begin{lemma}\label{eqim}
Let $\alpha\in \End(C_n)$. Let $1\leqslant i\leqslant j\leqslant n$ be such that $i\alpha=j\alpha$. Then $j-i$ is even. 
\end{lemma}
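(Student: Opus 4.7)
The plan is to track the consecutive differences of $\alpha$ on the interval $[i,j]$ as integers (not merely modulo $n$) and extract a parity constraint from a simple counting argument.

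First, I would invoke Proposition \ref{chars}(2): for every $k$, the integer $s_k := (k+1)\alpha - k\alpha$ satisfies $s_k \in \{1,-1,n-1,-(n-1)\}$. Since the sum telescopes, the hypothesis $i\alpha = j\alpha$ gives
\[
\sum_{k=i}^{j-1} s_k \;=\; j\alpha - i\alpha \;=\; 0.
\]
Next, I would split the indices according to which of the four values $s_k$ takes, writing $a,b,c,d$ for the number of steps equal to $1,-1,n-1,-(n-1)$ respectively. These counts satisfy the two relations
\[
a+b+c+d \;=\; j-i, \qquad (a-b)+(n-1)(c-d) \;=\; 0.
\]

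The main step is to show $c=d$. From the second relation, $a-b = (n-1)(d-c)$; on the other hand the trivial bound $|a-b| \leqslant a+b \leqslant j-i \leqslant n-1$ forces $|d-c|\leqslant 1$. If $|d-c|=1$ then $|a-b|=n-1$, which combined with $a+b\leqslant n-1$ forces $a+b=n-1$ and hence $c+d=0$, contradicting $|d-c|=1$. Therefore $c=d$, and then $a=b$ as well.

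I expect no real obstacle: the only subtlety is to resist the temptation to reduce modulo $n$ too early, since the essential information is that both permissible nonzero step sizes are odd as integers only up to the factor $n-1$, and working with the integer equations is what makes the size bound $|d-c|\leqslant 1$ bite. Once $a=b$ and $c=d$, the identity $j-i = 2a+2c$ shows $j-i$ is even, completing the proof.
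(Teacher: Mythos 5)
Your proof is correct and follows essentially the same route as the paper's: both write each consecutive difference as an integer in $\{1,-1,n-1,1-n\}$ via Proposition \ref{chars}, telescope to get a zero sum, count the four step types, and force the counts of the $\pm1$ steps and of the $\pm(n-1)$ steps to pair up, whence $j-i=2a+2c$. Your way of extracting the key equality $c=d$ (via the bound $|a-b|\leqslant a+b\leqslant j-i\leqslant n-1$) is a slightly cleaner packaging of the same contradiction the paper runs, so no further comment is needed.
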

\begin{proof}
Let $i\leqslant \ell<j$. Then, by Proposition \ref{chars},  there exists $t_\ell\in\{1-n,-1,1,n-1\}$ such that $(\ell+1)\alpha=\ell\alpha+t_\ell$. 
Then $i\alpha=j\alpha=i\alpha+\sum_{\ell=i}^{j-1}t_\ell$ and so $\sum_{\ell=i}^{j-1}t_\ell=0$. 

Define $T_t=\{i\leqslant \ell< j\mid t_\ell=t\}$, for  $t\in\{1-n,-1,1,n-1\}$. Then $|T_{1-n}|+|T_{-1}|+|T_{1}|+|T_{n-1}|=j-i$, 
whence $0\leqslant |T_t|\leqslant j-i$ for all $t\in\{1-n,-1,1,n-1\}$. 
Moreover, 
$$
0=\sum_{\ell=i}^{j-1}t_\ell=(1-n)|T_{1-n}|-|T_{-1}|+|T_{1}|+(n-1)|T_{n-1}|. 
$$

Suppose, by contradiction, that $|T_{1-n}|<|T_{n-1}|$. Then $|T_{n-1}|=|T_{1-n}|+r$, for some $r\geqslant1$, and so 
$(n-1)|T_{n-1}|+(1-n)|T_{1-n}|=(n-1)r\geqslant n-1$. 
Moreover, $|T_{n-1}|\geqslant1$, whence $|T_{-1}| <j-i$ and so $|T_{1}|-|T_{-1}|\geqslant -|T_{-1}|> i-j\geqslant 1-n$, 
which implies that 
$$
0=(|T_{1}|-|T_{-1}|)+((n-1)|T_{n-1}|+(1-n)|T_{1-n}|)>(1-n)+(n-1)=0,
$$
a contradiction. Similarly, we cannot have $|T_{1-n}|>|T_{n-1}|$. 

Hence $|T_{1-n}|=|T_{n-1}|$ and so $|T_{1}|=|T_{-1}|$, from which follows that $j-i=2|T_{1}|+2|T_{n-1}|$. Thus $j-i$ is an even number, as required. 
\end{proof}

The following result can be found in \cite{Michels&Knauer:2009}. 
Here, in line with our context, we present another proof. 

\begin{proposition}[\cite{Michels&Knauer:2009}]\label{endodd}
For all odd positive integer $n\geqslant3$, $\End(C_n) = \Aut(C_n)$. 
\end{proposition}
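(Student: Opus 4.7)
The plan is to show that if $n \geq 3$ is odd, then every $\alpha \in \End(C_n)$ must be injective; the observation recorded in the preliminaries that every bijective weak endomorphism of a finite graph is an automorphism then upgrades such an $\alpha$ to a member of $\Aut(C_n)$, which combined with the trivial inclusion $\Aut(C_n) \subseteq \End(C_n)$ yields the claimed equality.

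Suppose for contradiction that $\alpha \in \End(C_n)$ fails to be injective, so there exist $1 \leq i < j \leq n$ with $i\alpha = j\alpha$. By Lemma \ref{eqim}, $j - i$ is even. The key idea is to exploit the rotational symmetry of $C_n$ to apply Lemma \ref{eqim} a second time and deduce that the complementary cyclic distance $n - (j - i)$ is also even. The sum of these two even integers is $n$, contradicting the oddness of $n$.

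To execute the symmetry step, set $\beta = g^{j-1}\alpha$. Since $g \in \Aut(C_n) \subseteq \End(C_n)$ and $\End(C_n)$ is closed under composition, $\beta \in \End(C_n)$. Using the explicit two-branch formula for $\ell g^{j-1}$ recalled in the preliminaries, one computes $1\beta = j\alpha$ (from the first branch, since $1 \leq n - (j-1)$) and $j'\beta = i\alpha$ where $j' = n - (j - i) + 1$ (from the second branch, since $j' \geq n - (j-1) + 1$). As $i\alpha = j\alpha$, Lemma \ref{eqim} applied to $\beta$ at the indices $1$ and $j'$, which satisfy $1 \leq 1 < j' \leq n$ because $1 \leq j - i \leq n - 1$, yields that $j' - 1 = n - (j - i)$ is even, completing the contradiction.

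The only step that requires genuine care is verifying that the rotation $g^{j-1}$ really does bring the ``second'' pair of equal images to the endpoints of an honest interval in $[1, n]$, so that Lemma \ref{eqim} applies as stated rather than needing a cyclic extension. This is a routine case split using the piecewise formula for $g^{j-1}$, but it is the one place where the argument could go wrong if the indices were mishandled.
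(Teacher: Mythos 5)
Your proof is correct, and it takes a genuinely different (and leaner) route than the paper's. Both arguments hinge on Lemma \ref{eqim}, but they extract the second parity constraint differently. The paper normalizes $\alpha$ to $\beta=g^{i-1}\alpha g^{n-i\alpha+1}$ with $1\beta=(j-i+1)\beta=1$, takes the largest $r$ with $r\beta=1$, invokes Lemma \ref{ims} to control $[r+1,n]\beta$, rules out the surjective case, and only then finds a second pair of equal images $(r+1)\beta=n\beta$ to which Lemma \ref{eqim} applies; the conclusion is that $n-2=(n-(r+1))+(r-1)$ is even. You instead observe that the single coincidence $i\alpha=j\alpha$ already yields two arcs of the cycle whose lengths must both be even: $j-i$ directly, and $n-(j-i)$ after rotating by $g^{j-1}$ so that the ``complementary'' arc becomes an honest interval $[1,j']$ with $j'=n-(j-i)+1$. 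Your index verification is sound ($1$ lands in the first branch of $g^{j-1}$ and $j'$ in the second, giving $1\beta=j\alpha$ and $j'\beta=i\alpha$, with $2\leqslant j'\leqslant n$), so Lemma \ref{eqim} applies twice and $n$ is a sum of two even numbers. What your approach buys is the elimination of Lemma \ref{ims}, of the extremal choice of $r$, and of the case split on whether $\beta$ is surjective; what the paper's approach buys is essentially nothing extra here beyond staying close to the machinery it has already set up for later sections.
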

\begin{proof}
It suffices to show that if $\End(C_n)$ contains a non-automorphism then $n$ is even. 

Let us suppose there exists $\alpha\in\End(C_n)$ such that $\alpha\not\in\Aut(C_n)$. 
Then $i\alpha=j\alpha$, for some $1\leqslant i<j\leqslant n$. 
Let $\beta=g^{i-1}\alpha g^{n-i\alpha+1}$. Hence $\beta\in\End(C_n)$ and $1\beta=(j-i+1)\beta=1$. 
Notice that, since $1<j-i+1$, this last equality implies that $\beta\not\in\Aut(C_n)$. 

Take the largest $1<r\leqslant n$ such that $r\beta=1$. Then, by Lemma \ref{eqim}, $r-1$ is even. 
On the other hand, by Proposition \ref{chars}, we have $|1\beta-n\beta|\in\{1,n-1\}$, whence $n\beta\neq1\beta=r\beta$ and so $r<n$. 

Again by Proposition \ref{chars}, we have $n\beta-1=|n\beta-1\beta|\in\{1,n-1\}$ and $(r+1)\beta-1=|(r+1)\beta-r\beta|\in\{1,n-1\}$, 
whence $n\beta, (r+1)\beta\in\{2,n\}$. 

Now, by Lemma \ref{ims}, there exist $1\leqslant a\leqslant b\leqslant n$ such that 
$[r+1,n]\beta=[a,b]$ or $[r+1,n]\beta=[b,n]\cup[1,a]$. 
Given the way we took $r$, we get $1\not\in[r+1,n]\beta$ and so we can only have $[r+1,n]\beta=[a,b]$. 

If $n\beta\neq (r+1)\beta$ then $2,n\in [r+1,n]\beta=[a,b]$, whence $[2,n]\subseteq\im(\beta)$. 
On the other hand, since we also have $1\beta=1$, we obtain $\im(\beta)=\Omega_n$ and so $\beta\in\Aut(C_n)$, 
which is a contradiction.  

Hence, $n\beta=(r+1)\beta$ and so, by Lemma \ref{eqim}, $n-(r+1)$ is even. 
Thus, $n-2=(n-(r+1))+(r-1)$ is even and so is $n$, as required. 
\end{proof}

Denote by $\mathscr{K}_n$ the subsemigroup of $\mathscr{T}_n$ constituted by its $n$ constant transformations.

\begin{proposition}\label{swend} 
For $n\geqslant4$, $\swEnd(C_n) = \sEnd(C_n)\cup\mathscr{K}_n$. 
In particular, 
for $n\geqslant5$, $\swEnd(C_n) = \Aut(C_n)\cup\mathscr{K}_n$. 
Moreover, in this case, for any $c\in\mathscr{K}_n$, $\{g,h,c\}$ is a generating set of $\swEnd(C_n)$  of minimum size and,  
consequently, $\swEnd(C_n)$ has rank three. 
\end{proposition}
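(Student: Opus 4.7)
The plan is to prove the three assertions separately: the equality of sets, the consequence for $n\geqslant 5$, and the generating set with its minimality. The two inclusions composing the first equality split naturally. For $\sEnd(C_n)\cup\mathscr{K}_n\subseteq\swEnd(C_n)$ I would just check the definitions: a strong endomorphism is trivially a strong weak endomorphism, and any constant map $c\in\mathscr{K}_n$ satisfies the strong weak endomorphism condition vacuously, since the hypothesis ``$u\alpha\neq v\alpha$'' never holds.

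The substantive content is the reverse inclusion $\swEnd(C_n)\subseteq\sEnd(C_n)\cup\mathscr{K}_n$. Since a strong weak endomorphism is a strong endomorphism exactly when it collapses no edge (the forward edge-preservation following from the swEnd condition once $u\alpha\neq v\alpha$, and the backward implication already being part of the swEnd axiom), it suffices to show that if $\alpha\in\swEnd(C_n)$ identifies two adjacent vertices then $\alpha$ is constant. Suppose $i\alpha=(i+1)\alpha$ (indices mod $n$), and write $a_j=j\alpha$. By Proposition \ref{chars}(3)(a), $|a_{i-1}-a_i|\in\{0,1,n-1\}$. If $a_{i-1}\neq a_i$, then $\{a_{i-1},a_i\}$ is an edge of $C_n$, and since $a_{i+1}=a_i$, so is $\{a_{i-1},a_{i+1}\}$. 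Proposition \ref{chars}(3)(b), applied to the pair $(i-1,i+1)$, now forces $2=(i+1)-(i-1)\in\{1,n-1\}$, which is impossible for $n\geqslant 4$. Hence $a_{i-1}=a_i$; iterating this reasoning one step at a time in both directions around $C_n$ yields $a_1=a_2=\cdots=a_n$, i.e., $\alpha\in\mathscr{K}_n$. This proves $\swEnd(C_n)=\sEnd(C_n)\cup\mathscr{K}_n$, and combining with Proposition \ref{send} gives $\swEnd(C_n)=\Aut(C_n)\cup\mathscr{K}_n$ for $n\geqslant 5$.

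For the generating set, fix any $c\in\mathscr{K}_n$, say with image $\{k\}$. By Proposition \ref{aut}, $\langle g,h\rangle=\mathscr{D}_{2n}=\Aut(C_n)$, and since $\mathscr{D}_{2n}$ acts transitively on $\Omega_n$ the products $c\sigma$ with $\sigma\in\langle g,h\rangle$ run through every element of $\mathscr{K}_n$. Hence $\langle g,h,c\rangle=\Aut(C_n)\cup\mathscr{K}_n=\swEnd(C_n)$. For the lower bound on the rank, $\mathscr{K}_n$ is a two-sided ideal of $\swEnd(C_n)$, because any product involving a constant factor is itself constant; consequently any product of generators that equals an element of $\Aut(C_n)$ uses only generators from $\Aut(C_n)$. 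So any generating set $X$ of $\swEnd(C_n)$ satisfies $\langle X\cap\Aut(C_n)\rangle=\mathscr{D}_{2n}$, which together with the rank-two property of $\mathscr{D}_{2n}$ forces $|X\cap\Aut(C_n)|\geqslant 2$; since $X$ must also contain at least one constant, $|X|\geqslant 3$. The only delicate point in the whole argument is the distance-$2$ propagation step, which genuinely requires $n\geqslant 4$ (so that $2\notin\{1,n-1\}$); the case $n=3$, where $\swEnd(C_3)=\mathscr{T}_3$ contains non-constant edge-collapsing maps, confirms that this hypothesis is necessary.
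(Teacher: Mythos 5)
Your proof is correct, and while the overall skeleton matches the paper's (easy inclusion; reduce to showing that a strong weak endomorphism collapsing an edge must be constant; then the generating set), your treatment of the key step is organized differently. The paper assumes $\alpha$ is non-constant and runs a case analysis on extremal indices: it first rules out $1\alpha=n\alpha$ by locating the smallest $j$ with $j\alpha\neq 1\alpha$, then rules out $n\alpha=i\alpha$, then takes the smallest $j\geqslant i+2$ with $j\alpha\neq i\alpha$, each time deriving a contradiction from condition (b) of Proposition \ref{chars}. Your neighbour-to-neighbour propagation (a collapsed edge forces both adjacent edges to be collapsed, hence all of them by going around the cycle) is more uniform and arguably cleaner: it applies condition (b) only ever to pairs of vertices at distance two, and it makes transparent exactly where $n\geqslant 4$ is used. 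Your handling of the generating set also expands the paper's one-line ``it is clear'' into an explicit ideal argument ($\mathscr{K}_n$ is an ideal, so any generating set must contain two elements generating $\mathscr{D}_{2n}$ plus at least one constant), which is a genuine improvement in rigour. One small imprecision: when the collapsed edge sits at the ``seam'' (e.g.\ $i=n$ or $i=1$), the ordered pair to which Proposition \ref{chars}(3)(b) is applied has difference $n-2$ rather than $2$; since $n-2\notin\{1,n-1\}$ for $n\geqslant 4$ as well, the conclusion is unaffected, but the identity $2=(i+1)-(i-1)$ as written does not cover that case and should be stated modulo the wraparound.
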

\begin{proof}
We will use Proposition \ref{chars} several times without further mention.  

We begin by noticing that, clearly, $\sEnd(C_n)\cup\mathscr{K}_n\subseteq\swEnd(C_n)$. 

Conversely, let $\alpha\in\swEnd(C_n)\setminus\sEnd(C_n)$. 
Then, there exists $1\leqslant i\leqslant n$ such that $(i+1)\alpha=i\alpha$.  
We aim to show that $\alpha\in\mathscr{K}_n$. Admit, by contradiction, that $\alpha$ is not constant. 

Suppose that $1\alpha=n\alpha$. 
Since $\alpha$ is not constant, we can take the smallest $1<j<n$ such that $j\alpha\neq 1\alpha=n\alpha$.  
Then $(j-1)\alpha=1\alpha$ and so 
$|j\alpha-1\alpha|=|j\alpha-(j-1)\alpha|\in\{0,1,n-1\}$. Since $j\alpha\neq1\alpha$, we have 
$|j\alpha-1\alpha|\in\{1,n-1\}$ and so $j-1\in\{1,n-1\}$, i.e. $j=2$ or $j=n$. As $j\neq n$, it follows that $j=2$. 
Hence, 
$|n\alpha-2\alpha|=|1\alpha-2\alpha|\in\{0,1,n-1\}$ and, as $2\alpha\neq1\alpha$, we have 
$|n\alpha-2\alpha|\in\{1,n-1\}$ and so $n-2\in\{1,n-1\}$, which is a contradiction. 
Thus $1\alpha\neq n\alpha$ and so $1\leqslant i<n$. 

Suppose that $n\alpha=i\alpha$. Then, as $n\alpha\neq 1\alpha$, 
we have $|(i+1)\alpha-1\alpha|=|i\alpha-1\alpha|=|n\alpha-1\alpha|\in\{1,n-1\}$, 
whence $i,i-1\in\{1,n-1\}$, which is a contradiction. Thus $n\alpha\neq i\alpha$. 

Now, take the smallest $i+2\leqslant j\leqslant n$ such that $j\alpha\neq i\alpha$.  
Since $(j-1)\alpha=i\alpha\neq j\alpha$, we have 
$|j\alpha-(i+1)\alpha|=|j\alpha-i\alpha|=|j\alpha-(j-1)\alpha|\in\{1,n-1\}$, 
whence $j-i-1,j-i\in\{1,n-1\}$, which again is a contradiction. 

Therefore, $\alpha\in\mathscr{K}_n$ and so $\swEnd(C_n) = \sEnd(C_n)\cup\mathscr{K}_n$.  

For $n\geqslant5$, in view of Propositions \ref{send} and \ref{aut}, 
we also have $\swEnd(C_n) = \Aut(C_n)\cup\mathscr{K}_n= \mathscr{D}_{2n}\cup\mathscr{K}_n$. 
Since $\{g,h\}$ is a  generating set of $\mathscr{D}_{2n}$ of minimum size, it is clear that 
$\{g,h,c\}$ is a generating set of $\mathscr{D}_{2n}\cup\mathscr{K}_n$  of minimum size, for any $c\in\mathscr{K}_n$, 
as required.  
\end{proof} 

For $n\geqslant5$, it follows that $|\swEnd(C_n)| = 3n$.  
On the other hand, as $|\sEnd(C_4)|=32$, we have $|\swEnd(C_4)| =36$.  
Moreover, $\{g,h,u,c\}$, with $u$ any one of the $16$ elements of $\sEnd(C_4)$ with rank $3$ and $c\in\mathscr{K}_n$,  
constitutes a minimum size generating set of $\swEnd(C_4)$, whence $\swEnd(C_4)$ has rank four. 

\smallskip

Next, we exhibit formulas for the cardinalities of $\End(C_n)$ and $\wEnd(C_n)$. 
The formula for $|\End(C_n)|$ can also be found in the paper \cite{Michels&Knauer:2009} due to Michels and Knauer. 
Here, we give an alternative proof.

\begin{proposition}\label{sizes}
For $n\geqslant3$, 
$$
|\End(C_n)|=
\left\{\begin{array}{ll}
2n & \mbox{if $n$ is odd}\\
\displaystyle 2n+n\binom{n}{\frac{n}{2}} & \mbox{if $n$ is even}
\end{array}\right.
\quad\text{and}\quad
|\wEnd(C_n)|= 3n + 2n\sum_{k=1}^{\lfloor\frac{n}{2}\rfloor}\binom{2k-1}{k}\binom{n}{2k}. 
$$
\end{proposition}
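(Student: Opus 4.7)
The plan is to encode each weak endomorphism $\alpha$ by the pair $(1\alpha,(c_1,\ldots,c_n))$, where $c_i$ is the residue class of $(i+1)\alpha-i\alpha$ modulo $n$ (with $n+1$ identified with $1$). Since $n\geqslant 3$, the three classes $\{-1,0,1\}$ are distinct modulo $n$, so by Proposition \ref{chars} we have $\alpha\in\wEnd(C_n)$ if and only if $c_i\in\{-1,0,1\}$ for every $i$, and $\alpha\in\End(C_n)$ if and only if $c_i\in\{-1,1\}$ for every $i$. Conversely, given any base value $1\alpha\in\Omega_n$ and any such sign sequence satisfying the closure condition $c_1+\cdots+c_n\equiv 0\pmod n$, the formula $i\alpha=1\alpha+c_1+\cdots+c_{i-1}\pmod n$ unambiguously reconstructs $\alpha$. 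This gives a bijection showing that the cardinality we want is $n$ times the number $N$ of admissible sign sequences.

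For $\End(C_n)$, each $c_i\in\{-1,1\}$, so $\sum c_i$ has the parity of $n$ and satisfies $\lvert\sum c_i\rvert\leqslant n$. The multiples of $n$ in this range are $\pm n$, together with $0$ if $n$ is even. The value $\pm n$ forces all signs equal, contributing $2$ sequences. If $n$ is even, the value $0$ requires exactly $n/2$ entries of each sign, contributing $\binom{n}{n/2}$ sequences. Multiplying by the $n$ choices of $1\alpha$ yields $2n$ if $n$ is odd and $2n+n\binom{n}{n/2}$ if $n$ is even, as claimed.

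For $\wEnd(C_n)$, each $c_i\in\{-1,0,1\}$ and again $\sum c_i\in\{-n,0,n\}$. The extremes $\pm n$ contribute one sequence each. For $\sum c_i=0$, fixing the common number $k$ of $+1$'s and $-1$'s (so $n-2k$ zeros), the number of arrangements is $\binom{n}{2k}\binom{2k}{k}$, summed over $k=0,\ldots,\lfloor n/2\rfloor$. Thus
$$
N \;=\; 2 \;+\; \sum_{k=0}^{\lfloor n/2\rfloor}\binom{n}{2k}\binom{2k}{k} \;=\; 3 \;+\; \sum_{k=1}^{\lfloor n/2\rfloor}\binom{n}{2k}\binom{2k}{k}.
$$
Multiplying by $n$ and using the elementary identity $\binom{2k}{k}=2\binom{2k-1}{k}$ produces the stated formula.

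The proof is conceptually light; the main delicacy is in the bijection of the first paragraph, specifically verifying that the closure of the walk around the last edge $\{n,1\}$ is exactly captured by $\sum c_i\equiv 0\pmod n$, and that each integer difference $(i+1)\alpha-i\alpha\in\{1-n,-1,0,1,n-1\}$ is uniquely recovered from its residue $c_i$ given that $i\alpha$ and $(i+1)\alpha$ lie in $\Omega_n$. Once this encoding is in place, the rest is straightforward multinomial counting.
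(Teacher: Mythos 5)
Your proof is correct and follows essentially the same strategy as the paper's: parametrize each (weak) endomorphism by $1\alpha$ together with the signs of the consecutive differences, and count sign sequences whose sum is constrained by the cyclic closure. The only difference is bookkeeping --- you use all $n$ cyclic differences with the single condition $c_1+\cdots+c_n\equiv 0\pmod n$, whereas the paper uses the $n-1$ differences $t_1,\ldots,t_{n-1}$ and constrains $\sum_{i=1}^{n-1}t_i$ to lie in $\{1-n,-1,0,1,n-1\}$; this makes your final binomial sums marginally cleaner, but the argument is the same.
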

\begin{proof} 
First, we prove the formula for $|\End(C_n)|$. If $n$ is odd then Propositions \ref{aut} and \ref{endodd} give us $\End(C_n)=\mathscr{D}_{2n}$ and so 
$|\End(C_n)|=2n$. Hence, suppose that $n$ is even. 

Let $\alpha\in\End(C_n)$. Then, by Proposition \ref{chars}, for all $1\leqslant i\leqslant n$, 
there exists $t\in\{1-n,-1,1,n-1\}$ such that $(i+1)\alpha=i\alpha+t$. 
Therefore, for $1\leqslant i\leqslant n-1$, we may take $t_i\in\{-1,1\}$ such that $(i+1)\alpha\equiv (i\alpha+t_i)\,(\text{mod}\,n)$. 
It follows that $n\alpha\equiv (1\alpha+\sum_{i=1}^{n-1}t_i)\,(\text{mod}\,n)$, i.e. $n\alpha-1\alpha\equiv \sum_{i=1}^{n-1}t_i\,(\text{mod}\,n)$. 
Since $1-n\leqslant\sum_{i=1}^{n-1}t_i\leqslant n-1$ and $n\alpha-1\alpha \in\{1-n,-1,1,n-1\}$, we deduce that 
$\sum_{i=1}^{n-1}t_i\in\{1-n,-1,1,n-1\}$. 

Let $T_{-1}=\{1\leqslant i\leqslant n-1\mid t_i=-1\}$ and $T_{1}=\{1\leqslant i\leqslant n-1\mid t_i=1\}$. 

If $|\sum_{i=1}^{n-1}t_i|=n-1$ then $(|T_{-1}|,|T_1|)\in\{(n-1,0),(0,n-1)\}$ and since $1\alpha$ can assume $n$ values, we have $2n$ elements in $\End(C_n)$ under these conditions. 

If $|\sum_{i=1}^{n-1}t_i|=1$ then $(|T_{-1}|,|T_1|)\in\{(\frac{n}{2}-1,\frac{n}{2}),(\frac{n}{2},\frac{n}{2}-1)\}$. Since there are $\binom{n-1}{\frac{n}{2}}$ choices for values of $t_i$ equal to $1$ 
(as well as equal to $-1$) and $n$ choices for the value of $1\alpha$, we have $2n\binom{n-1}{\frac{n}{2}}$ elements in $\End(C_n)$ under these conditions. 

Thus, we conclude that $|\End(C_n)|=2n+2n\binom{n-1}{\frac{n}{2}}=2n+n\binom{n}{\frac{n}{2}}$. 

\smallskip 

Next, we show the formula for $|\wEnd(C_n)|$, proceeding with a reasoning similar to the previous one. 

Let $\alpha\in\wEnd(C_n)$. In this case, in view of Proposition \ref{chars},  
there exists $t_i\in\{-1,0,1\}$ such that $(i+1)\alpha\equiv (i\alpha+t_i)\,(\text{mod}\,n)$, for $1\leqslant i\leqslant n-1$. 
Then $n\alpha\equiv (1\alpha+\sum_{i=1}^{n-1}t_i)\,(\text{mod}\,n)$, i.e. $n\alpha-1\alpha\equiv \sum_{i=1}^{n-1}t_i\,(\text{mod}\,n)$. 
Since $1-n\leqslant\sum_{i=1}^{n-1}t_i\leqslant n-1$ and, by Proposition \ref{chars}, $n\alpha-1\alpha \in\{1-n,-1,0,1,n-1\}$, it follows that 
$\sum_{i=1}^{n-1}t_i\in\{1-n,-1,0,1,n-1\}$. 

Let $T_{-1}=\{1\leqslant i\leqslant n-1\mid t_i=-1\}$ and $T_{1}=\{1\leqslant i\leqslant n-1\mid t_i=1\}$. 

If $|\sum_{i=1}^{n-1}t_i|=n-1$ then $(|T_{-1}|,|T_1|)\in\{(n-1,0),(0,n-1)\}$ and since $1\alpha$ can assume $n$ values, as in the case of $\End(C_n)$, 
we also have $2n$ elements in $\wEnd(C_n)$ under these conditions. 

If $\sum_{i=1}^{n-1}t_i=0$ then $|T_{-1}|=|T_1|=k$, for some $0\leqslant k\leqslant \lfloor\frac{n}{2}\rfloor$. 
Since for each $0\leqslant k\leqslant \lfloor\frac{n}{2}\rfloor$ there are $\binom{n-1}{k}\binom{n-1-k}{k}$ choices for non-null $t_i$'s and $1\alpha$ can assume $n$ values, 
we have $n\sum_{k=0}^{\lfloor\frac{n}{2}\rfloor}\binom{n-1}{k}\binom{n-1-k}{k}$ elements in $\wEnd(C_n)$ under these conditions. 

If $\sum_{i=1}^{n-1}t_i=-1$ then $|T_{-1}|=|T_1|+1=k$, for some $1\leqslant k\leqslant \lfloor\frac{n}{2}\rfloor$. 
If $\sum_{i=1}^{n-1}t_i=1$ then $|T_1|=|T_{-1}|+1=k$, for some $1\leqslant k\leqslant \lfloor\frac{n}{2}\rfloor$. Hence, 
in these two cases, for each $1\leqslant k\leqslant \lfloor\frac{n}{2}\rfloor$, 
we have $\binom{n-1}{k}\binom{n-1-k}{k-1}$ choices for non-null $t_i$'s. 
As again $1\alpha$ can assume $n$ values, in each of these cases, 
we obtain $n\sum_{k=1}^{\lfloor\frac{n}{2}\rfloor}\binom{n-1}{k}\binom{n-1-k}{k-1}$ elements in $\wEnd(C_n)$ under these conditions. 

Thus, we get $|\wEnd(C_n)|=2n + n\sum_{k=0}^{\lfloor\frac{n}{2}\rfloor}\binom{n-1}{k}\binom{n-1-k}{k} + 2 n\sum_{k=1}^{\lfloor\frac{n}{2}\rfloor}\binom{n-1}{k}\binom{n-1-k}{k-1} = 
3n + 2n\sum_{k=1}^{\lfloor\frac{n}{2}\rfloor}\binom{2k-1}{k}\binom{n}{2k}$, as required.  
\end{proof}

\section{Regularity and Green's relations of $\End(C_n)$ and $\wEnd(C_n)$}\label{regularity}

In order to describe the regular elements of $\wEnd(C_n)$ and $\End(C_n)$, we begin this section by defining the following concept.

Let $1\leqslant k\leqslant n$ and let $\alpha\in\wEnd(C_n)$ be a transformation of rank $k$. 
We say that $\alpha$ has a \textit{full sublist of consecutive images} if there exist $1\leqslant i\leqslant n$ and $u\in\{-1,1\}$ such that 
$(i+t)\alpha=i\alpha+tu$ for all $0\leqslant t\leqslant k-1$. 
In this case, 
being $I$ the arc $\{i, i+1,\ldots,i+k-1\}$ of $\Omega_n$, 
$\alpha_{|I}$ is injective and $I\alpha=\im(\alpha)$. 

Notice that, any transformation of $\wEnd(C_n)$ with rank $1$ or $2$ has a full sublist of consecutive images. 
Moreover, any transformation of $\End(C_n)$ with rank $3$ also has a full sublist of consecutive images.

\begin{proposition}\label{reg}
Let $n\geqslant3$ and $\alpha\in\wEnd(C_n)$. Then $\alpha$ is regular if and only if $\alpha$ has a full sublist of consecutive images. 
Moreover, if $\alpha\in\End(C_n)$ then $\alpha$ is regular in $\End(C_n)$ if and only if $\alpha$ is regular in $\wEnd(C_n)$. 
\end{proposition}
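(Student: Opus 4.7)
The plan is to prove the equivalence in both directions, using as the central tool the fact that regularity $\alpha\beta\alpha=\alpha$ forces $\beta$ to restrict to an injective section of $\alpha$ over $\im(\alpha)$: every $y\in\im(\alpha)$ must satisfy $y\beta\alpha=y$, hence $\beta|_{\im(\alpha)}$ is injective and maps $\im(\alpha)$ into a transversal of $\ker(\alpha)$.

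For the sufficient direction, suppose $\alpha$ has a full sublist of consecutive images realized on an arc $I=\{i,i+1,\ldots,i+k-1\}$ with $(i+t)\alpha=i\alpha+tu$ (mod $n$) for $0\leqslant t\leqslant k-1$ and $u\in\{-1,1\}$. I would observe that this local behaviour extends canonically to an element of $\mathscr{D}_{2n}$: the rotation $g^{i\alpha-i}$ when $u=1$, or the reflection $hg^{i+i\alpha-1}$ when $u=-1$, taken modulo $n$. Calling this element $\sigma\in\Aut(C_n)$, one has $\sigma|_I=\alpha|_I$ and $I\sigma=\im(\alpha)$. Setting $\beta=\sigma^{-1}\in\Aut(C_n)$ yields $\alpha\beta\alpha=\alpha$ directly: for any $x\in\Omega_n$, $x\alpha\in\im(\alpha)=I\sigma$, so $(x\alpha)\sigma^{-1}\in I$, and applying $\alpha$ (which agrees with $\sigma$ on $I$) returns $x\alpha$. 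Since $\beta\in\Aut(C_n)\subseteq\End(C_n)\subseteq\wEnd(C_n)$, this simultaneously establishes regularity in $\End(C_n)$ whenever $\alpha$ itself lies in $\End(C_n)$, handling one half of the moreover statement.

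For the necessary direction, assume $\alpha\beta\alpha=\alpha$ in $\wEnd(C_n)$ and set $k=\rank(\alpha)$, $J=\im(\alpha)$, $S=J\beta$. By Lemma~\ref{ims}, $J$ is an arc; injectivity of $\beta|_J$ (forced by the regularity identity) together with Lemma~\ref{ims} applied to $\beta$ shows that $S$ is also an arc of length $k$. On the $k-1$ pairs of neighbours internal to $S$, Proposition~\ref{chars}(1) restricts the consecutive differences to $\{0,\pm 1\}$ modulo $n$, and injectivity of $\alpha|_S\colon S\to J$ eliminates the $0$. I would then rule out any sign reversal inside $S$: a step $+1$ immediately followed by $-1$ (modulo $n$) would force two elements of $S$ two positions apart to coincide under $\alpha$, contradicting this injectivity. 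Consequently all differences share a common sign $u\in\{-1,1\}$, which delivers a full sublist of consecutive images on the arc $I=S$.

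The delicate part of the execution is precisely this sign-consistency argument, combined with careful uniform handling of the modular convention so that wrap-around arcs of the form $[b,n]\cup[1,a]$ for either $I$, $J$, or $S$ are treated on the same footing as ordinary intervals. Once this is in place, the remaining content of the moreover clause is immediate: regularity in $\End(C_n)$ trivially implies regularity in $\wEnd(C_n)$ since $\End(C_n)\subseteq\wEnd(C_n)$, and the converse is already supplied by the explicit $\beta=\sigma^{-1}\in\Aut(C_n)\subseteq\End(C_n)$ constructed in the sufficient direction.
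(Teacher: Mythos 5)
Your proof is correct and follows essentially the same route as the paper: the same dihedral section of $\alpha$ over the arc for sufficiency, and for necessity the same arc, since your $S=J\beta$ is exactly $\im(\alpha\beta)$, so the wrap-around worry you flag dissolves if you apply Lemma~\ref{ims} to $\alpha\beta$ on $[1,n]$ directly, as the paper does. Your explicit sign-consistency argument merely fills in a step the paper leaves implicit.
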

\begin{proof} 
Let us assume that $\alpha$ is a transformation of $\wEnd(C_n)$ with rank $k$ ($1\leqslant k\leqslant n$). 

First, suppose that $\alpha$ is regular (in $\wEnd(C_n)$). 
Let $\beta\in\wEnd(C_n)$ be such that $\alpha=\alpha\beta\alpha$. 
Then $\alpha$ is injective in $\im(\alpha\beta)$.  
As $\alpha\beta\in\wEnd(C_n)$, by Lemma \ref{ims}, we have $\im(\alpha\beta)=[a,b]$ or $\im(\alpha\beta)=[b,n]\cup[1,a]$, 
for some $1\leqslant a\leqslant b\leqslant n$. 
On the other hand, $k=|\im(\alpha)|=|\im(\alpha\beta\alpha)|\leqslant |\im(\alpha\beta)|\leqslant|\im(\alpha)|=k$, whence $|\im(\alpha\beta)|=k$. 

Let $i=a$, if $\im(\alpha\beta)=[a,b]$, and $i=b$, otherwise. Let $u=(i+1)\alpha-i\alpha$. 
Notice that, as $\alpha$ is injective in $\im(\alpha\beta)$, then $u\in\{-1,1\}$. 
Hence, $(i+t)\alpha=i\alpha+tu$ for $0\leqslant t\leqslant k-1$, 
i.e. $\alpha$ has a full sublist of consecutive images. 

Conversely, suppose there exist $1\leqslant i\leqslant n$ and $u\in\{-1,1\}$ such that 
$(i+t)\alpha=i\alpha+tu$ for all $0\leqslant t\leqslant k-1$. 

Let $\beta=g^{n+i-i\alpha}$, if $u=1$, and $\beta=hg^{i+i\alpha-1}$, otherwise. 
Then $\beta\in\wEnd(C_n)$ and $\alpha=\alpha\beta\alpha$, whence $\alpha$ is regular in $\wEnd(C_n)$. 

Since we also have $\beta\in\End(C_n)$, if $\alpha\in\End(C_n)$ then $\alpha$ is also regular in $\End(C_n)$. 

Thus, both statements of this result are proved. 
\end{proof}

Obviously, for all $n\geqslant 3$, $\Aut(C_n)$ is a regular semigroup. 
Then, by Propositions \ref{send} and \ref{endodd}, $\sEnd(C_n)$, for $n=3$ and $n\geqslant5$, and $\End(C_n)$, for all odd $n\geqslant3$, 
also are regular semigroups. Using Proposition \ref{reg}, we may show that $\sEnd(C_4)=\End(C_4)$, $\End(C_6)$, $\End(C_8)$, 
$\wEnd(C_3)$, $\wEnd(C_4)$ and $\wEnd(C_5)$ are regular semigroups. On the other hand, Proposition \ref{reg} also allows us to deduce that, 
for $n\geqslant5$, $\End(C_{2n})$ and $\wEnd(C_{n+1})$ are non-regular semigroups: for instance, 
$$
\left(\begin{array}{ccccccccccccc}
1&2&3&4&5&6&7&8&9&10&\cdots&2n-1&2n\\
1&2&3&2&3&4&3&2&3&2&\cdots&3&2
\end{array}\right)
\quad\text{and}\quad
\begin{pmatrix} 
1&2&3&4&5&6&\cdots&n+1\\
1&2&2&3&2&2&\cdots&2
\end{pmatrix}
$$
are non-regular elements of  $\End(C_{2n})$ and $\wEnd(C_{n+1})$, respectively. 
Finally, taking into account Proposition \ref{swend} and the equality $\swEnd(C_3)=\wEnd(C_3)$, it is easy to conclude that $\swEnd(C_n)$ is regular for $n\geqslant3$. 

Observe that these conclusions about the regularity of $\End(C_n)$ and $\wEnd(C_n)$ are, respectively, in agreement with Theorem 3.4 and Theorem 4.2 proved by Wilkeit in \cite{Wilkeit:1996}. 

\smallskip 

Next, in order to describe the Green's relations $\mathcal{R}$ on $\wEnd(C_{n})$ and on $\End(C_{n})$, we first present two lemmas. 

\begin{lemma}\label{2im}
Let $\alpha\in \wEnd(C_n)$. Let $1\leqslant j\leqslant n$ be such that $j,j+1\in\im(\alpha)$. 
Then there exists $i\in j\alpha^{-1}$ such that $\{i-1,i+1\}\cap(j+1)\alpha^{-1}\neq\emptyset$. 
\end{lemma}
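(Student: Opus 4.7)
The plan is to argue by contradiction. Suppose that no $i\in j\alpha^{-1}$ satisfies $i-1\in(j+1)\alpha^{-1}$ or $i+1\in(j+1)\alpha^{-1}$. By Proposition \ref{chars}(1), the cyclic sequence $(1\alpha,2\alpha,\ldots,n\alpha,1\alpha)$ is a closed walk on the target cycle $C_n$ whose consecutive terms differ by $0$, $1$, or $n-1$ in absolute value. The hypothesis says precisely that no step of this walk takes $j$ to $j+1$ or $j+1$ to $j$, i.e., the walk never uses the edge $\{j,j+1\}$ of $C_n$.

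Removing $\{j,j+1\}$ from $C_n$ leaves a Hamiltonian path $P$ with endpoints $j$ and $j+1$, on which the walk stays. Since $j,j+1\in\im(\alpha)$, the walk visits both endpoints of $P$; and since each of its steps moves at most one position along $P$, it must also visit every vertex of $P$ in between, yielding $\im(\alpha)=\Omega_n$. Thus $\alpha$ is surjective, hence bijective, and by the remark that any bijective weak endomorphism of a finite graph is an automorphism we conclude $\alpha\in\Aut(C_n)$. But then Proposition \ref{chars}(1) forces every step of the walk to have size exactly $1$ or $n-1$, making it a closed unit-step walk through all $n$ vertices of $C_n$, i.e., a Hamiltonian cycle. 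Such a walk uses every edge of $C_n$, including $\{j,j+1\}$, a contradiction.

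The main difficulty is the intermediate-value step on $P$ combined with the passage from surjectivity to the Hamiltonian-cycle contradiction. Once the hypothesis is restated as ``the image walk avoids a fixed edge of the target cycle'', both the surjectivity (coming from the walk visiting both endpoints of the complementary path) and the final contradiction (no automorphism can avoid an edge of $C_n$) follow cleanly, so no serious obstacle remains beyond a careful bookkeeping of indices modulo $n$.
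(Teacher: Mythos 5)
Your proof is correct, but it takes a genuinely different route from the paper's. The paper argues directly: it sets $p=\min j\alpha^{-1}$, $q=\min(j+1)\alpha^{-1}$ and (say for $p<q$) chooses the witness $i$ explicitly as the element of $j\alpha^{-1}$ closest to $q$ from below; Lemma \ref{ims} applied to $[i+1,q]$ then forces $(i+1)\alpha\in\{j-1,j+1\}$, and in the residual case $(i+1)\alpha=j-1$ the arc structure forces $\im(\alpha)=\Omega_n$, so $\alpha\in\mathscr{D}_{2n}$ and injectivity hands you $(i-1)\alpha=j+1$. You instead argue globally by contradiction: the negation says the cyclic image walk $(1\alpha,\ldots,n\alpha,1\alpha)$ never traverses the edge $\{j,j+1\}$, so it lives on the Hamiltonian path $P=C_n-\{j,j+1\}$; a discrete intermediate-value argument between the visits to the two endpoints of $P$ forces surjectivity, hence $\alpha\in\Aut(C_n)$, and an automorphism permutes the edge set of $C_n$, so its image walk must use $\{j,j+1\}$ after all. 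Both proofs funnel the hard case into ``$\alpha$ is an automorphism,'' and your discrete IVT on $P$ is essentially the content of Lemma \ref{ims} redone from scratch for this configuration. What the paper's version buys is an explicit location for $i$ and direct reuse of an already-proved lemma; what yours buys is a cleaner conceptual statement (the conclusion is exactly ``the image walk cannot avoid an edge of the target cycle'') with no case split on $p<q$ versus $p>q$. One small point worth making explicit in your write-up: the step from ``bijective'' to ``every step of the walk has size $1$ or $n-1$'' uses injectivity to exclude the value $0$ allowed by Proposition \ref{chars}(1); as written you attribute it to the proposition alone.
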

\begin{proof} 
Let $p=\min{j\alpha^{-1}}$ and $q=\min{(j+1)\alpha^{-1}}$. 

\smallskip 

First, suppose that $p<q$ and let $i=\max\{x\in j\alpha^{-1}\mid x < q \}$. 
Then $i+1\leq q$ and, by Lemma \ref{ims}, there exist $1\leqslant a\leqslant b\leqslant n$ such that $[i+1,q]\alpha=[a,b]$ or $[i+1,q]\alpha=[b,n]\cup[1,a]$. 

Observe that $(i+1)\alpha\in\{i\alpha-1,i\alpha,i\alpha+1\}=\{j-1,j,j+1\}$. 
It follows by the maximality of $i$ that $j\not\in[i+1,q]\alpha$. 
In particular, $(i+1)\alpha\neq j$ and so $(i+1)\alpha\in\{j-1,j+1\}$. 

If $(i+1)\alpha=j+1$ then the lemma is proved. 

Hence, suppose that $(i+1)\alpha=j-1$. Then $j-1,j+1\in[i+1,q]\alpha$. 
Therefore, since $j\not\in[i+1,q]\alpha$, we cannot have $[i+1,q]\alpha=[a,b]$, whence $[i+1,q]\alpha=[b,n]\cup[1,a]$ 
and it is mandatory to have $a=j-1$ and $b=j+1$, i.e. $[i+1,q]\alpha=[1,j-1]\cup[j+1,n]$. 
As $j\in\im(\alpha)$ it follows that $\im(\alpha)=\Omega_n$ and so $\alpha\in\mathscr{D}_{2n}$. 
Then, as $(i-1)\alpha\in\{i\alpha-1,i\alpha+1\}=\{j-1,j+1\}$, $(i+1)\alpha=j-1$ and $\alpha$ is a permutation, we have $(i-1)\alpha=j+1$, which proves the lemma. 

\smallskip 

Next, suppose that $p>q$ and let $i=\min\{x\in j\alpha^{-1}\mid q<x \}$. 
Then, by a reasoning similar to the previous one, we can also conclude that $(i-1)\alpha=j+1$ or $(i+1)\alpha=j+1$, as required. 
\end{proof}

\begin{lemma}\label{sker}
Let $\alpha,\beta\in\wEnd(C_n)$ be such that $\im(\alpha)=\{1,2,\ldots,k\}$, $\im(\beta)=\{1,2,\ldots,\ell\}$ and $\ker(\beta)\subseteq\ker(\alpha)$, 
with $1\leqslant k\leqslant \ell\leqslant \lfloor\frac{n}{2}\rfloor+1$. 
Let $\phi:\{1,2,\ldots,\ell\}\longrightarrow\{1,2,\ldots,k\}$ be the surjective mapping defined by $j\phi=a$ if and only if $j\beta^{-1}\subseteq a\alpha^{-1}$, 
for $1\leqslant  j\leqslant \ell$ and $1\leqslant a\leqslant k$. Then $(j+1)\phi\in\{j\phi-1,j\phi,j\phi+1\}$, for $1\leqslant j\leqslant \ell-1$. 
Moreover, if $\alpha\in\End(C_n)$ then $(j+1)\phi\in\{j\phi-1, j\phi+1\}$, for $1\leqslant j\leqslant \ell-1$. 
\end{lemma}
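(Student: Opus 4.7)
The plan is to set up the factorization that $\phi$ expresses, namely that $\alpha$ factors as $\beta$ composed with $\phi$ on each fiber, and then to transport adjacency information from $\beta$ to $\alpha$ using Lemma \ref{2im}. I would first check that $\phi$ is well-defined: since $\im(\beta)=\{1,\dots,\ell\}$ each fiber $j\beta^{-1}$ is non-empty, and since $\ker(\beta)\subseteq\ker(\alpha)$ the set $j\beta^{-1}$ is contained in a single fiber $a\alpha^{-1}$ of $\alpha$. Moreover, since $\im(\alpha)=\{1,\dots,k\}$ each such $a$ lies in $\{1,\dots,k\}$, and surjectivity of $\phi$ follows from $\im(\alpha)=\{1,\dots,k\}$. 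The key consequence of this construction is the identity $x\alpha=(x\beta)\phi$ for every $x\in\Omega_n$.

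Now fix $1\leqslant j\leqslant\ell-1$. Since $j,j+1\in\im(\beta)$ and $\beta\in\wEnd(C_n)$, I would apply Lemma \ref{2im} to produce $i\in j\beta^{-1}$ and $i'\in\{i-1,i+1\}$ with $i'\beta=j+1$. Via the factorization above, this gives $i\alpha=j\phi$ and $i'\alpha=(j+1)\phi$. Because $\{i,i'\}$ is an edge of $C_n$, Proposition \ref{chars}(1) applied to $\alpha\in\wEnd(C_n)$ yields
\[
|(j+1)\phi-j\phi|=|i'\alpha-i\alpha|\in\{0,1,n-1\}.
\]
If instead $\alpha\in\End(C_n)$, then Proposition \ref{chars}(2) strengthens this to $\{1,n-1\}$.

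The only remaining step, which is the mild obstacle, is to rule out the value $n-1$. Both $j\phi$ and $(j+1)\phi$ lie in $\{1,\dots,k\}$, so
\[
|(j+1)\phi-j\phi|\leqslant k-1\leqslant \lfloor\tfrac{n}{2}\rfloor \leqslant n-2 < n-1
\]
for every $n\geqslant 3$. This discards the $n-1$ possibility, leaving $(j+1)\phi\in\{j\phi-1,j\phi,j\phi+1\}$ in general, and $(j+1)\phi\in\{j\phi-1,j\phi+1\}$ when $\alpha\in\End(C_n)$. Note that the symmetric cases $i'=i-1$ and $i'=i+1$ require no separate treatment, because Proposition \ref{chars} is stated for all consecutive pairs of $\Omega_n$ (with the cyclic convention). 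So the entire argument boils down to combining Lemma \ref{2im}, the fiber-factorization of $\alpha$ through $\phi$, and the arithmetic bound on $k$.
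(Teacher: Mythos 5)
Your proof is correct and follows essentially the same route as the paper's: apply Lemma \ref{2im} to $\beta$, transport the resulting adjacent pair through the fiber-factorization $x\alpha=(x\beta)\phi$, invoke Proposition \ref{chars}, and exclude the value $n-1$ (the paper does this by noting both images are at most $k<n$, you by bounding the difference by $k-1\leqslant\lfloor\frac{n}{2}\rfloor\leqslant n-2$; both are valid). No gaps.
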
 
\begin{proof}
Let $1\leqslant j\leqslant \ell-1$. 
Then $j,j+1\in\im(\beta)$ and so, by Lemma \ref{2im}, there exists $i\in j\beta^{-1}$ such that $\{i-1,i+1\}\cap(j+1)\beta^{-1}\neq\emptyset$. 
Hence, as $j\beta^{-1}\subseteq (j\phi)\alpha^{-1}$ and $(j+1)\beta^{-1}\subseteq ((j+1)\phi)\alpha^{-1}$, we have $j\phi=i\alpha$ and, on the other hand, 
$(j+1)\phi=(i-1)\alpha$ or $(j+1)\phi=(i+1)\alpha$.  
Suppose that $(j+1)\phi=(i+1)\alpha$. Then $i\alpha=j\phi,(i+1)\alpha=(j+1)\phi\leqslant k<n$, whence we cannot have $|(i+1)\alpha-i\alpha|=n-1$.  
So, by Proposition \ref{chars}, we obtain $|(i+1)\alpha-i\alpha|\in\{0,1\}$ 
(and $|(i+1)\alpha-i\alpha|=1$, if $\alpha\in\End(C_n)$). Similarly, if $(j+1)\phi=(i-1)\alpha$ then 
$|(i-1)\alpha-i\alpha|\in\{0,1\}$ 
(and $|(i-1)\alpha-i\alpha|=1$, if $\alpha\in\End(C_n)$). 
Thus $(j+1)\phi\in\{j\phi-1,j\phi,j\phi+1\}$  (and $(j+1)\phi\in\{j\phi-1, j\phi+1\}$, if $\alpha\in\End(C_n)$), as required. 
\end{proof} 

Now, we can prove the following proposition. 

\begin{proposition} \label{kers} 
Let $\alpha,\beta\in\wEnd(C_n)$. Then $\ker(\alpha)=\ker(\beta)$ if and only if there exists $\sigma\in\mathscr{D}_{2n}$ such that $\alpha=\beta\sigma$. 
\end{proposition}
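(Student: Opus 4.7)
The reverse implication is immediate: any $\sigma \in \mathscr{D}_{2n}$ is a bijection of $\Omega_n$, so $\ker(\beta\sigma) = \ker(\beta) = \ker(\alpha)$.

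For the forward implication, set $k = \rank(\alpha) = \rank(\beta)$ (equal because $\ker(\alpha) = \ker(\beta)$). The plan is to split on whether $k = n$. In the bijective case $k = n$, both $\alpha$ and $\beta$ are bijective weak endomorphisms, hence automorphisms, so by Proposition~\ref{aut} both lie in $\mathscr{D}_{2n}$; since $\mathscr{D}_{2n}$ is a group, $\sigma := \beta^{-1}\alpha$ does the job. Assume now $k < n$, which by Lemma~\ref{wpar} forces $k \leq \lfloor n/2 \rfloor + 1$. Using (\ref{trans}), I would pick $s, t \in \{0,\ldots,n-1\}$ with $\im(\alpha g^s) = \im(\beta g^t) = [1,k]$ and set $\alpha' = \alpha g^s$, $\beta' = \beta g^t$. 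Right multiplication by a permutation preserves kernels, so $\ker(\alpha') = \ker(\beta')$, and Lemma~\ref{sker} (with $\ell = k$) then produces a surjective map $\phi : [1,k] \to [1,k]$, defined by $j\beta'^{-1} \subseteq (j\phi)\alpha'^{-1}$, with $(j+1)\phi - j\phi \in \{-1,0,1\}$ for all $1 \leq j \leq k-1$. The equality of kernels forces $\phi$ to be a bijection, so these successive differences actually lie in $\{-1,1\}$.

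The core combinatorial step is then to classify such a $\phi$: a bijection of $[1,k]$ whose successive differences are $\pm 1$ cannot change direction without repeating a value, so $\phi(j) = 1\phi + (j-1)$ or $\phi(j) = 1\phi - (j-1)$, and the range being $[1,k]$ pins down $1\phi \in \{1,k\}$. Hence $\phi$ is either the identity on $[1,k]$ or the reversal $j \mapsto k - j + 1$. From the explicit formulas for $\mathscr{D}_{2n}$ given in the introduction, the identity on $[1,k]$ agrees with the restriction of $1 \in \mathscr{D}_{2n}$, while the reversal agrees with the restriction of $hg^k \in \mathscr{D}_{2n}$. Taking the corresponding $\tau \in \mathscr{D}_{2n}$, one has $\alpha' = \beta'\tau$ on all of $\Omega_n$ (the equation need only be tested on $\im(\beta') = [1,k]$), whence $\alpha = \beta g^t \tau g^{n-s}$ and $\sigma := g^t \tau g^{n-s}$ is the required element of $\mathscr{D}_{2n}$.

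The main obstacle will be the classification of $\phi$: turning the local $\pm 1$ information coming from Lemma~\ref{sker} into the rigid dichotomy between identity and reversal, and then matching each alternative to a concrete dihedral element. Everything else is routine bookkeeping with the translations $g^s, g^t$ supplied by (\ref{trans}), together with the observation that $\alpha = \beta\sigma$ only constrains $\sigma$ on $\im(\beta)$.
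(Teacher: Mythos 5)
Your proposal is correct and follows essentially the same route as the paper's proof: reduce to the non-bijective case via Proposition~\ref{aut}, normalize both images to $[1,k]$ using (\ref{trans}), apply Lemma~\ref{sker} to the induced map $\phi$, use injectivity of $\phi$ to force successive differences $\pm1$ and hence the identity/reversal dichotomy, and realize these as $1$ and $hg^{k}$ in $\mathscr{D}_{2n}$. No substantive differences to report.
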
 
\begin{proof}
First, notice that, it is clear that, if $\alpha=\beta\sigma$, for some $\sigma\in\mathscr{D}_{2n}$, then $\ker(\alpha)=\ker(\beta)$. 

Conversely, suppose that $\ker(\alpha)=\ker(\beta)$. Clearly, if $\alpha\in\mathscr{D}_{2n}$ then $\beta\in\mathscr{D}_{2n}$ and so $\alpha=\beta (\beta^{-1}\alpha)$. So, suppose that $\alpha\not\in\mathscr{D}_{2n}$ and let $k=\rank(\alpha)$. 
Then $1\leqslant  k \leqslant  \lfloor\frac{n}{2}\rfloor+1$, by Lemma \ref{wpar}. 
By (\ref{trans}), there exist $0\leqslant  r,s \leqslant  n-1$ such that $\im(\alpha g^r)=\{1,2,\ldots,k\}=\im(\beta g^s)$. 
Let $\alpha_1=\alpha g^r$ and $\beta_1=\beta g^s$. Then $\ker(\alpha_1)=\ker(\beta_1)$. 
Let $\phi:\{1,2,\ldots,k\}\longrightarrow\{1,2,\ldots,k\}$ be the surjective mapping defined by $j\phi=i$ if and only if $j\beta_1^{-1}\subseteq i\alpha_1^{-1}$, 
for $1\leqslant  i,j\leqslant k$. Then, by Lemma \ref{sker}, $(j+1)\phi\in\{j\phi-1,j\phi,j\phi+1\}$, for $1\leqslant j\leqslant k-1$. 
Furthermore, $\phi$ must be also injective, whence $(j+1)\phi\in\{j\phi-1,j\phi+1\}$, for $1\leqslant j\leqslant k-1$, and so  
$$
\phi=\phi_1= \begin{pmatrix} 
1&2&\cdots&k\\ 
1&2&\cdots&k
\end{pmatrix}
\quad\text{or}\quad 
\phi=\phi_2= \begin{pmatrix} 
1&2&\cdots&k\\
k&k-1&\cdots&1
\end{pmatrix}. 
$$
Now, take $\tau=1$ (in $\mathscr{D}_{2n}$), if $\phi=\phi_1$, and $\tau=hg^k$, if $\phi=\phi_2$. 
It is easy to show that $\alpha_1=\beta_1\phi=\beta_1\tau$ and so 
$\alpha=\alpha_1g^{n-r}=\beta_1\tau g^{n-r}=\beta g^s\tau g^{n-r}$, as required. 
\end{proof} 

An immediate corollary of Proposition \ref{kers} is: 

\begin{corollary}\label{Rrel} 
Let $\alpha,\beta\in\wEnd(C_n)$ {\rm[}respectively, $\alpha,\beta\in\End(C_n)${\rm]}. Then $\alpha\mathcal{R}\beta$ if and only $\ker(\alpha)=\ker(\beta)$. 
\end{corollary}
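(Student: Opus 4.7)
The plan is to deduce the corollary directly from Proposition \ref{kers}, noting only that $\mathscr{D}_{2n}=\Aut(C_n)$ consists of units of both $\wEnd(C_n)$ and $\End(C_n)$. Recall that, in a monoid $M$, $\alpha\mathcal{R}\beta$ means $\alpha M^{1}=\beta M^{1}$, i.e.\ there exist $s,t\in M^{1}$ with $\alpha=\beta s$ and $\beta=\alpha t$.

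For the forward implication, suppose $\alpha\mathcal{R}\beta$ in $\wEnd(C_n)$ (respectively $\End(C_n)$), and pick $s,t$ in the monoid (or equal to the identity) such that $\alpha=\beta s$ and $\beta=\alpha t$. The first equality immediately gives $\ker(\beta)\subseteq\ker(\alpha)$, and the second gives the reverse inclusion, so $\ker(\alpha)=\ker(\beta)$.

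For the converse, assume $\ker(\alpha)=\ker(\beta)$. By Proposition \ref{kers}, there is some $\sigma\in\mathscr{D}_{2n}$ with $\alpha=\beta\sigma$. Since $\mathscr{D}_{2n}=\Aut(C_n)$ is a group, $\sigma^{-1}$ also lies in $\mathscr{D}_{2n}$; and because $\Aut(C_n)\subseteq\sEnd(C_n)\subseteq\End(C_n)\subseteq\wEnd(C_n)$, both $\sigma$ and $\sigma^{-1}$ belong to each of the two monoids under consideration. Therefore $\beta=\alpha\sigma^{-1}$ inside $\End(C_n)$ (and \emph{a fortiori} inside $\wEnd(C_n)$), and combined with $\alpha=\beta\sigma$ this yields $\alpha\mathcal{R}\beta$ in both settings.

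There is essentially no obstacle here: the real work was done in Proposition \ref{kers}, and the only point to verify is that the $\mathscr{D}_{2n}$-witness it produces is invertible inside the smaller monoid $\End(C_n)$ as well, which follows from the inclusion $\mathscr{D}_{2n}=\Aut(C_n)\subseteq\End(C_n)$.
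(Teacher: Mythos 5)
Your proof is correct and is exactly the argument the paper intends: it declares the corollary an immediate consequence of Proposition \ref{kers}, and your write-up simply spells out that deduction (kernel comparison for the forward direction, invertibility of the $\mathscr{D}_{2n}$-witness inside both monoids for the converse). No issues.
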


\smallskip

We could expect, given Proposition \ref{kers} and Corollary \ref{Rrel}, that two elements $\alpha$ and $\beta$ of $\wEnd(C_{n})$, or of $\End(C_{n})$, 
are $\mathcal{L}$-related if and only if there exists $\sigma \in \mathscr{D}_{2n}$ such that $\alpha=\sigma\beta$. This is not the case. 
For instance, in $\End(C_{12})$, take 
\small 
$$
\mbox{\normalsize$\alpha=$}\left(\begin{array}{cccccccccccc}
1&2&3&4&5&6&7&8&9&10&11&12\\ 
3&4&3&2&3&2&3&2&1&2&3&2
\end{array}\right) 
\mbox{\normalsize~and~}
\mbox{\normalsize$\beta=$}\left(\begin{array}{cccccccccccc}
1&2&3&4&5&6&7&8&9&10&11&12\\ 
3&2&1&2&1&2&3&2&3&4&3&2
\end{array}\right).
$$\normalsize 
It is easy to check that $\alpha\neq\sigma\beta$ for all $\sigma \in \mathscr{D}_{2\times12}$. 
Nevertheless, we have $\alpha \mathcal{L} \beta$ in $\End(C_{12})$ (and so in $\wEnd(C_{12})$). 
In fact,  for example, if 
\small 
$$
\mbox{\normalsize$\gamma=$}\left(\begin{array}{cccccccccccc}
1&2&3&4&5&6&7&8&9&10&11&12 \\ 
9&10&9&8&7&6&7&6&5&6&7&8
\end{array}\right) 
\mbox{\normalsize~and~}
\mbox{\normalsize$\lambda=$}\left(\begin{array}{cccccccccccccc}
1&2&3&4&5&6&7&8&9&10&11&12\\ 
11&10&9&10&9&10&11&12&1&2&1&12
\end{array}\right) 
$$\normalsize 
then 
$\alpha=\gamma\beta$ and $\beta=\lambda\alpha$. 

\medskip 

Next, we give characterizations of the $\mathcal{L}$-relations on $\End(C_{n})$ and on $\wEnd(C_{n})$, 
starting by proving the following lemma: 

\begin{lemma}\label{restinj}
Let $I$ be an arc of $\Omega_n$ and let $\alpha\in \wEnd(C_{n})$ be such that $\alpha_{|I}$ is injective. 
Then there exists $\sigma \in \mathscr{D}_{2n}$ such that $\alpha_{|I}=\sigma_{|I}$.
\end{lemma}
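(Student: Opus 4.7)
The plan is to parametrise the arc as a sequence of consecutive vertices and use the weak-endomorphism condition together with injectivity of $\alpha_{|I}$ to force $\alpha$ on $I$ to behave like a pure rotation or a pure reflection of $C_n$, both of which lie in $\mathscr{D}_{2n}$. Write $I=\{a_1,a_2,\ldots,a_k\}$ with $a_{t+1}\equiv a_t+1\pmod{n}$ (using the paper's convention of addition modulo $n$ with representatives in $\{1,\ldots,n\}$). If $k=1$ there is nothing to do, since $g^{m}$ with $m\in\{0,\ldots,n-1\}$ and $m\equiv a_1\alpha-a_1\pmod{n}$ already agrees with $\alpha$ at $a_1$; so assume $k\geq 2$.

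\textbf{Local form of consecutive differences.} For every $1\leq t\leq k-1$ the pair $\{a_t,a_{t+1}\}$ is an edge of $C_n$, so Proposition \ref{chars}(1) gives $|a_{t+1}\alpha-a_t\alpha|\in\{0,1,n-1\}$. Injectivity of $\alpha_{|I}$ excludes the value $0$, hence $a_{t+1}\alpha\equiv a_t\alpha\pm 1\pmod{n}$ for each such $t$, with the $\pm$ sign depending \emph{a priori} on $t$.

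\textbf{The sign is constant in $t$.} Suppose, towards a contradiction, that for some $t$ we have $a_{t+1}\alpha\equiv a_t\alpha+1$ and $a_{t+2}\alpha\equiv a_{t+1}\alpha-1\pmod{n}$, or the symmetric situation with signs swapped. Then $a_{t+2}\alpha\equiv a_t\alpha\pmod{n}$; since both values lie in $\Omega_n$ they coincide, contradicting injectivity of $\alpha_{|I}$ on the three distinct vertices $a_t,a_{t+1},a_{t+2}$. So the sign is independent of $t$.

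\textbf{Constructing $\sigma$.} If the common sign is $+1$, take $\sigma=g^{m}$ with $m\in\{0,\ldots,n-1\}$ and $m\equiv a_1\alpha-a_1\pmod{n}$; an easy induction on $t$ gives $a_t\sigma\equiv a_t+m\equiv a_1\alpha+(t-1)\equiv a_t\alpha\pmod{n}$ and hence equality in $\Omega_n$. If the common sign is $-1$, take $\sigma=hg^{m}$ with $m\in\{0,\ldots,n-1\}$ and $m\equiv a_1+a_1\alpha-1\pmod{n}$; the explicit formula $i\,hg^{m}\equiv m+1-i\pmod{n}$ from the introduction then yields $a_t\sigma\equiv a_1+a_1\alpha-a_t\equiv a_1\alpha-(t-1)\equiv a_t\alpha\pmod{n}$, again equality in $\Omega_n$. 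In either case $\sigma\in\mathscr{D}_{2n}$ and $\alpha_{|I}=\sigma_{|I}$. The main obstacle I foresee is the second step, where one must argue cleanly modulo $n$ and remember that the wrap-around edge $\{n,1\}$ is treated uniformly with the other edges, so the proof applies equally well to arcs of the form $[j,n]\cup[1,i]$.
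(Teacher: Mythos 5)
Your proof is correct and follows essentially the same route as the paper: both identify $\alpha_{|I}$ with $g^{i\alpha-i}_{|I}$ or $hg^{i\alpha+i-1}_{|I}$ (your exponents $m\equiv a_1\alpha-a_1$ and $m\equiv a_1+a_1\alpha-1$ coincide with these), the only difference being that the paper states the increasing/decreasing dichotomy directly while you justify it via the sign-constancy argument.
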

\begin{proof}
Since $\alpha$ is injective in $I$, if $I=[i,j]$, for some $1\leqslant i\leqslant j\leqslant n$, then 
$$
\alpha_{|I}=
\begin{pmatrix} 
i&i+1&\cdots&j\\ 
i\alpha&i\alpha+1&\cdots&i\alpha+j-i
\end{pmatrix}
=g^{i\alpha-i}_{|I}
\quad\text{or}\quad
\alpha_{|I}=\begin{pmatrix} 
i&i+1&\cdots&j\\ 
i\alpha&i\alpha-1&\cdots&i\alpha-j+i
\end{pmatrix}
=hg^{i\alpha+i-1}_{|I},
$$ 
and if $I=[j,n]\cup[1,i]$, for some $1\leqslant i< j-1\leqslant n-1$, then 
$$
\alpha_{|I}=
\begin{pmatrix} 
j&\cdots&n&1&\cdots&i\\ 
i\alpha-i-n+j&\cdots&i\alpha-i&i\alpha-i+1&\cdots&i\alpha
\end{pmatrix}
=g^{i\alpha-i}_{|I}
$$
or
$$
\alpha_{|I}=\begin{pmatrix} 
j&\cdots&n&1&\cdots&i\\ 
i\alpha+i+n-j&\cdots&i\alpha+i&i\alpha+i-1&\cdots&i\alpha
\end{pmatrix}
=hg^{i\alpha+i-1}_{|I},
$$ 
as required. 
\end{proof}

\begin{proposition}\label{Lrel}
Let $\alpha,\beta \in \wEnd(C_{n})$ {\rm[}respectively, $\alpha,\beta\in\End(C_n)${\rm]}. 
Then $\alpha \mathcal{L} \beta$ if and only if there exist an arc $I$ of $\Omega_n$, $\sigma \in \mathscr{D}_{2n}$ 
and idempotents $\varepsilon_{1}, \varepsilon_{2}\in \wEnd(C_{n})$ {\rm[}respectively, $\varepsilon_1,\varepsilon_2\in\End(C_n)${\rm]} 
such that $I\alpha=\im(\alpha)$, $\alpha_{|I}=(\sigma\beta)_{|I}$, $\im(\varepsilon_{1})=I\sigma$, $\im(\varepsilon_{2})=I$, 
$\alpha=\sigma\varepsilon_{1}\beta$ and  $\beta=\sigma^{-1}\varepsilon_{2}\alpha$. 
\end{proposition}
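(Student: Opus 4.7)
\textbf{Plan for Proposition \ref{Lrel}.}

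\emph{Converse direction} ($\Leftarrow$). This is immediate: from $\alpha=\sigma\varepsilon_1\beta$ and $\beta=\sigma^{-1}\varepsilon_2\alpha$ we obtain $\alpha\in\wEnd(C_n)\beta$ and $\beta\in\wEnd(C_n)\alpha$, hence $\alpha\mathcal{L}\beta$ (respectively in $\End(C_n)$).

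\emph{Forward direction} ($\Rightarrow$). From $\alpha\mathcal{L}\beta$ I extract $\gamma,\lambda\in\wEnd(C_n)$ (respectively $\End(C_n)$) with $\alpha=\gamma\beta$ and $\beta=\lambda\alpha$; in particular $(\gamma\lambda)\alpha=\alpha$ and $(\lambda\gamma)\beta=\beta$. Using finiteness of $\wEnd(C_n)$ I pick $m\geqslant1$ such that $\eta:=(\gamma\lambda)^m$ and $\eta':=(\lambda\gamma)^m$ are both idempotent (for instance taking $m$ to be the product of the individual idempotent-power exponents). Then $\eta\alpha=\alpha$, $\eta'\beta=\beta$, and $\eta,\eta'$ restrict to the identity on $I:=\im(\eta)$ and $I':=\im(\eta')$ respectively; by Lemma \ref{ims} both $I$ and $I'$ are arcs of $\Omega_n$. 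The condition $I\alpha=\im(\alpha)$ follows since $\alpha(x)=\alpha(\eta(x))$ with $\eta(x)\in I$ for every $x\in\Omega_n$.

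The structural heart of the argument is to identify $\gamma|_I$ and $\lambda|_{I'}$ as bijections between $I$ and $I'$. Injectivity of $\gamma|_I$ is immediate: if $x_1,x_2\in I$ and $\gamma(x_1)=\gamma(x_2)$, iterating $\lambda\gamma$-applications yields $\eta(x_1)=\eta(x_2)$, so $x_1=x_2$. For the equality $\gamma(I)=I'$ I exploit the commutation identity $(\gamma\lambda)^m\gamma=\gamma(\lambda\gamma)^m$, which acting on elements gives $\gamma(\eta(x))=\eta'(\gamma(x))$, combined with the inclusion $\im(\eta')\subseteq\im(\gamma)$ (the last factor of $(\lambda\gamma)^m$ is $\gamma$): one deduces $\gamma(I)=\eta'(\im\gamma)=\im(\eta')=I'$. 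A symmetric argument gives $\lambda|_{I'}:I'\to I$ bijective, with $(\lambda|_{I'})\circ(\gamma|_I)=(\gamma\lambda)|_I$ a walking automorphism of the arc $I$ (identity or reflection). I then apply Lemma \ref{restinj} to extend the walking injection $(\lambda|_{I'})^{-1}:I\to I'$ to some $\sigma\in\mathscr{D}_{2n}$, obtaining $\sigma|_I=(\lambda|_{I'})^{-1}$, $I\sigma=I'$ and $\sigma^{-1}|_{I'}=\lambda|_{I'}$. This at once yields, for $x\in I$, $(\sigma\beta)(x)=\beta(\sigma(x))=\alpha(\lambda(\sigma(x)))=\alpha(x)$, so $\alpha|_I=(\sigma\beta)|_I$.

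For the idempotents I set $\varepsilon_1:=\sigma^{-1}\eta\sigma$ and $\varepsilon_2:=\sigma\eta'\sigma^{-1}$. Being conjugates of idempotents by a dihedral element, they are idempotents of $\wEnd(C_n)$ (respectively $\End(C_n)$, since $\mathscr{D}_{2n}\subseteq\End(C_n)$), with images $\sigma(I)=I\sigma$ and $\sigma^{-1}(I')=I$. The two global equations then reduce to $\sigma\varepsilon_1\beta=\eta\sigma\beta$ and $\sigma^{-1}\varepsilon_2\alpha=\eta'\sigma^{-1}\alpha$, and each is verified pointwise: $(\eta\sigma\beta)(x)=(\sigma\beta)(\eta(x))=\alpha(\eta(x))=\alpha(x)$, while $(\eta'\sigma^{-1}\alpha)(y)=\alpha(\sigma^{-1}(\eta'(y)))=\alpha(\lambda(\eta'(y)))=\beta(\eta'(y))=\beta(y)$, where the middle step uses $\sigma^{-1}|_{I'}=\lambda|_{I'}$ and the last uses $\eta'\beta=\beta$. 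The main obstacle I anticipate is proving the equality $\gamma(I)=I'$, which must chain the commutation identity with the image inclusion $\im(\eta')\subseteq\im(\gamma)$; a secondary technical point is checking that $(\lambda|_{I'})^{-1}$ genuinely satisfies the walking hypothesis of Lemma \ref{restinj} even in the degenerate case where $(\gamma\lambda)|_I$ acts as the reflection of $I$ rather than as the identity.
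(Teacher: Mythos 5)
Your argument is correct and shares the paper's overall skeleton -- extract $\gamma,\lambda$ with $\alpha=\gamma\beta$, $\beta=\lambda\alpha$, pass to idempotent powers of $\gamma\lambda$ and $\lambda\gamma$, obtain $\sigma\in\mathscr{D}_{2n}$ from Lemma \ref{restinj}, and build $\varepsilon_1,\varepsilon_2$ as (conjugates of) those idempotent powers -- but the execution is genuinely different at the step where the arc $I$ is produced. The paper observes that $\gamma'=(\gamma\lambda)^{\omega}\gamma$ is a \emph{regular} element, invokes Proposition \ref{reg} to get an arc $I$ with $\gamma'_{|I}$ injective and $I\gamma'=\im(\gamma')$, and takes $\sigma$ to agree with $\gamma'$ on $I$, setting $\varepsilon_1=\sigma^{-1}\gamma'$ and $\varepsilon_2=\sigma(\lambda\gamma)^{\omega}\sigma^{-1}$. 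You instead take $I=\im(\eta)$ directly (an arc by Lemma \ref{ims}, fixed pointwise by the idempotent $\eta$), prove by hand that $\gamma$ and $\lambda$ restrict to mutually inverse-up-to-arc-symmetry bijections $I\to I'$ and $I'\to I$, and choose $\sigma$ to agree with $(\lambda_{|I'})^{-1}$; this bypasses Proposition \ref{reg} entirely and yields the more symmetric pair $\varepsilon_1=\sigma^{-1}\eta\sigma$, $\varepsilon_2=\sigma\eta'\sigma^{-1}$. All your verifications check out, including the $\End(C_n)$ variant since $\mathscr{D}_{2n}\subseteq\End(C_n)$. One small repair: Lemma \ref{restinj} applies to restrictions of elements of $\wEnd(C_n)$ to arcs, so it cannot be applied directly to the partial map $(\lambda_{|I'})^{-1}$; apply it to $\lambda$ on the arc $I'$ to get $\tau\in\mathscr{D}_{2n}$ with $\tau_{|I'}=\lambda_{|I'}$ and set $\sigma=\tau^{-1}$, which satisfies $\sigma_{|I}=(\lambda_{|I'})^{-1}$ because $I'\tau=I$. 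With that, the degenerate case you flag (where $(\gamma\lambda)_{|I}$ is the reflection of $I$ rather than the identity) causes no trouble, since your $\sigma$ is built from $\lambda$ alone; note only that your $\sigma$ may then differ from the paper's (exactly by that reflection), and both choices satisfy all the required identities.
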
 
\begin{proof}
Let $M\in\{\End(C_{n}), \wEnd(C_{n})\}$ and suppose that  $\alpha \mathcal{L} \beta$ in $M$. 
Then, there exist $\gamma, \lambda \in M$ such that $\alpha=\gamma\beta$ and $\beta=\lambda\alpha$.  
Hence, $\alpha={(\gamma\lambda)}^{k}\gamma\beta$ and $\beta={(\lambda\gamma)}^{k}\lambda\alpha$ for all integer $k\geqslant0$. 
Take a positive integer $\omega$ such that 
${(\gamma\lambda)}^{\omega}$ and ${(\lambda\gamma)}^{\omega}$ are both idempotents. 
Then, we have $\alpha={(\gamma\lambda)}^{\omega}\gamma\beta$,  
$\beta={(\lambda\gamma)}^{\omega}\lambda\alpha$  
and, since ${(\gamma\lambda)}^{\omega}\gamma={(\gamma\lambda)}^{\omega}{(\gamma\lambda)}^{\omega}{(\gamma\lambda)}^{\omega}\gamma=
{(\gamma\lambda)}^{\omega}\gamma\lambda{(\gamma\lambda)}^{\omega-1}{(\gamma\lambda)}^{\omega}\gamma$, 
the element $\gamma'={(\gamma\lambda)}^{\omega}\gamma=\gamma{(\lambda\gamma)}^{\omega}$ of $M$ is regular.

Now, by Proposition \ref{reg}, there exists an arc $I$ such that $\gamma'_{|I}$ is injective and $I\gamma'=\im(\gamma')$. 
Hence, by Lemma \ref{restinj}, there exists $\sigma\in \mathscr{D}_{2n}$ such that ${\sigma_{|}}_{I}={\gamma'_{|}}_{I}$. 
Thus, 
$I\alpha=I(\gamma'\beta)=(I\gamma')\beta=\im(\gamma')\beta=\im(\gamma'\beta)=\im(\alpha)$ and 
$\alpha_{|I}=(\gamma'\beta)_{|I} = \gamma'_{|I}\beta_{|I\gamma'}=\sigma_{|I}\beta_{|I\sigma}=(\sigma\beta)_{|I}$.

Next, we show that $\gamma'\sigma^{-1}\gamma'=\gamma'$. 
Let $x\in \Omega_{n}$. 
Then $x\gamma'\in \im(\gamma')=I\gamma'=I\sigma$, whence $x\gamma'\sigma^{-1} \in  I$ and so 
$x(\gamma'\sigma^{-1}\gamma')=(x\gamma'\sigma^{-1})\gamma'=(x\gamma'\sigma^{-1})\sigma=x\gamma'$. 
Thus, $\gamma'\sigma^{-1}\gamma'=\gamma'$. 

Let $\varepsilon_{1}=\sigma^{-1}\gamma'$. 
Then, $\varepsilon_{1}$ is clearly an idempotent, $\im(\varepsilon_{1})=\im(\gamma')=I\sigma$ and $\sigma\varepsilon_{1}\beta=\sigma\sigma^{-1}\gamma'\beta=\alpha$.

On the other hand, let $\varepsilon_{2}=\sigma{(\lambda\gamma)}^{\omega}\sigma^{-1}$. 
It is also clear that $\varepsilon_{2}$ is an idempotent and we have 
\begin{multline*}
\beta=
{(\lambda\gamma)}^{\omega}\lambda\alpha=
{(\lambda\gamma)}^{\omega}{(\lambda\gamma)}^{\omega}\lambda\alpha=
{(\lambda\gamma)}^{\omega-1}\lambda\gamma{(\lambda\gamma)}^{\omega}\lambda\alpha=
{(\lambda\gamma)}^{\omega-1}\lambda\gamma'\lambda\alpha=
{(\lambda\gamma)}^{\omega-1}\lambda\gamma'\sigma^{-1}\gamma'\lambda\alpha=\\
={(\lambda\gamma)}^{\omega-1}\lambda\gamma{(\lambda\gamma)}^{\omega}\sigma^{-1}\gamma{(\lambda\gamma)}^{\omega}\lambda\alpha=
{(\lambda\gamma)}^{\omega}\sigma^{-1}\gamma{(\lambda\gamma)}^{\omega}\lambda\alpha=
{(\lambda\gamma)}^{\omega}\sigma^{-1}\gamma \beta =
{(\lambda\gamma)}^{\omega}\sigma^{-1}\alpha =\\
=\sigma^{-1}\sigma{(\lambda\gamma)}^{\omega}\sigma^{-1}\alpha =
\sigma^{-1}\varepsilon_2\alpha. 
\end{multline*}
Moreover, $\im({(\lambda\gamma)}^{\omega})=\im(\gamma')$. 
In fact, ${(\lambda\gamma)}^{\omega}={(\lambda\gamma)}^{\omega}{(\lambda\gamma)}^{\omega}=
{(\lambda\gamma)}^{\omega-1}\lambda(\gamma{(\lambda\gamma)}^{\omega})={(\lambda\gamma)}^{\omega-1}\lambda\gamma'$, 
which implies that 
$\im({(\lambda\gamma)}^{\omega})\subseteq \im(\gamma')$ and, 
as $\gamma'=\gamma{(\lambda\gamma)}^{\omega}$, we also obtain $\im(\gamma')\subseteq \im({(\lambda\gamma)}^{\omega})$. 
Therefore, finally, we have 
$\im(\varepsilon_{2})=\im (\sigma{(\lambda\gamma)}^{\omega}\sigma^{-1})=\im({(\lambda\gamma)}^{\omega}\sigma^{-1})=
\im({(\lambda\gamma)}^{\omega})\sigma^{-1}=\im(\gamma')\sigma^{-1}=(I\sigma)\sigma^{-1}=I$.

\smallskip 

Since the converse implication is obviously true, the proof is finished. 
\end{proof}

\medskip 

Going back to the above example, take $I=\{9,10,11,12,1,2\}$, $\sigma=g^8$, \small 
$$
\mbox{\normalsize$\varepsilon_{1}=$}\left(\begin{array}{cccccccccccc}
1&2&3&4&5&6&7&8&9&10&11&12\\
7&6&7&6&5&6&7&8&9&10&9&8
\end{array}\right)
\mbox{\normalsize~and~}
\mbox{\normalsize$\varepsilon_{2}=$}\left(\begin{array}{cccccccccccc}
1&2&3&4&5&6&7&8&9&10&11&12\\ 
1&2&1&12&11&10&9&10&9&10&11&12
\end{array}\right). 
$$\normalsize 
Then, we get 
$I\alpha=\im(\alpha)$, $\alpha_{|I}=(\sigma\beta)_{|I}$, $\im(\varepsilon_{1})=I\sigma$, $\im(\varepsilon_{2})=I$, 
$\alpha=\sigma\varepsilon_{1}\beta$ and $\beta=\sigma^{-1}\varepsilon_{2}\alpha$. 

\medskip 

We finish this section with descriptions of the $\mathcal{D}$-relation on $\End(C_{n})$ and $\wEnd(C_{n})$, which proofs are a straightforward consequence of 
Propositions \ref{kers} and \ref{Lrel}. 

\begin{proposition}
Let $\alpha,\beta \in \wEnd(C_{n})$ {\rm[}respectively, $\alpha,\beta\in\End(C_n)${\rm]}. 
Then $\alpha \mathcal{D} \beta$ if and only if there exists an arc $I$ of $\Omega_n$, $\sigma,\tau \in \mathscr{D}_{2n}$ 
and idempotents $\varepsilon_{1}, \varepsilon_{2}\in \wEnd(C_{n})$ {\rm[}respectively, $\varepsilon_{1}, \varepsilon_{2}\in\End(C_n)${\rm]} 
such that $I\alpha=\im(\alpha)$, 
$\alpha_{|I}=(\sigma\beta\tau)_{|I}$, 
$\im(\varepsilon_{1})=I\sigma$, 
$\im(\varepsilon_{2})=I$, 
$\alpha=\sigma\varepsilon_{1}\beta\tau$ and  $\beta=\sigma^{-1}\varepsilon_{2}\alpha\tau^{-1}$. 
\end{proposition}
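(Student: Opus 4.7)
My plan is to derive this characterisation of $\mathcal{D}$ directly from the factorisation $\mathcal{D} = \mathcal{L} \circ \mathcal{R}$, combined with the two characterisations already obtained: Corollary \ref{Rrel} (equivalently Proposition \ref{kers}) for $\mathcal{R}$, and Proposition \ref{Lrel} for $\mathcal{L}$. Throughout, let $M$ denote either $\wEnd(C_n)$ or $\End(C_n)$; the argument runs uniformly in both cases because Proposition \ref{Lrel} itself is stated uniformly and already guarantees that the idempotents $\varepsilon_1,\varepsilon_2$ lie in the same monoid as $\alpha,\beta$.

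For the forward implication, I assume $\alpha \mathcal{D} \beta$ in $M$, so there is $\gamma\in M$ with $\alpha\,\mathcal{L}\,\gamma\,\mathcal{R}\,\beta$. From $\gamma\,\mathcal{R}\,\beta$ and Corollary \ref{Rrel}, $\ker(\gamma)=\ker(\beta)$, and Proposition \ref{kers} then supplies $\tau\in\mathscr{D}_{2n}$ with $\gamma=\beta\tau$. Applying Proposition \ref{Lrel} to the pair $(\alpha,\gamma)$ yields an arc $I$ of $\Omega_n$, an element $\sigma\in\mathscr{D}_{2n}$ and idempotents $\varepsilon_1,\varepsilon_2\in M$ with $I\alpha=\im(\alpha)$, $\alpha_{|I}=(\sigma\gamma)_{|I}$, $\im(\varepsilon_1)=I\sigma$, $\im(\varepsilon_2)=I$, $\alpha=\sigma\varepsilon_1\gamma$ and $\gamma=\sigma^{-1}\varepsilon_2\alpha$. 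Substituting $\gamma=\beta\tau$ transforms these into $\alpha_{|I}=(\sigma\beta\tau)_{|I}$ and $\alpha=\sigma\varepsilon_1\beta\tau$, while right multiplication of $\gamma=\sigma^{-1}\varepsilon_2\alpha$ by $\tau^{-1}$ gives $\beta=\sigma^{-1}\varepsilon_2\alpha\tau^{-1}$; these are precisely the conditions in the statement.

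For the converse, I suppose the data $I,\sigma,\tau,\varepsilon_1,\varepsilon_2$ satisfying the listed conditions are given, and set $\gamma=\beta\tau$. Since $\tau$ is a permutation, right-multiplication by $\tau$ preserves kernels, hence $\ker(\gamma)=\ker(\beta)$ and Corollary \ref{Rrel} yields $\gamma\,\mathcal{R}\,\beta$. Rewriting the four identities of the hypothesis in terms of $\gamma$ (in particular, $\beta\tau=\gamma$ converts $\beta=\sigma^{-1}\varepsilon_2\alpha\tau^{-1}$ into $\gamma=\sigma^{-1}\varepsilon_2\alpha$, and $\alpha=\sigma\varepsilon_1\beta\tau$ into $\alpha=\sigma\varepsilon_1\gamma$) produces exactly the hypotheses of Proposition \ref{Lrel} for the pair $(\alpha,\gamma)$, so $\alpha\,\mathcal{L}\,\gamma$. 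Composing with $\gamma\,\mathcal{R}\,\beta$ gives $\alpha\,\mathcal{D}\,\beta$.

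There is no genuine obstacle: the proof is essentially the bookkeeping of substituting one factorisation into the other. The only small point of care is checking that the $\tau\in\mathscr{D}_{2n}$ produced by Proposition \ref{kers} is indeed invertible inside $M$ (which holds because $\mathscr{D}_{2n}=\Aut(C_n)\subseteq\End(C_n)\subseteq\wEnd(C_n)$ by Proposition \ref{aut}), so that both $\gamma=\beta\tau$ and $\beta=\gamma\tau^{-1}$ make sense and lie in $M$.
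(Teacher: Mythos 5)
Your proof is correct and follows exactly the route the paper intends: the authors state that the $\mathcal{D}$-characterisation ``is a straightforward consequence of Propositions \ref{kers} and \ref{Lrel}'', and your argument is precisely that bookkeeping, factoring $\mathcal{D}=\mathcal{L}\circ\mathcal{R}$, using Proposition \ref{kers} to write the intermediate element as $\beta\tau$ with $\tau\in\mathscr{D}_{2n}$, and then invoking Proposition \ref{Lrel}. No discrepancies to report.
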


\section{On the ranks of $\End(C_n)$ and $\wEnd(C_n)$} \label{ranksec}

Let $\mathscr{X}$ [respectively, $\mathscr{Y}$] be a transversal for the relation $\mathcal{R}$ in $\wEnd(C_n)\setminus\mathscr{D}_{2n}$ 
[respectively, in $\End(C_n)\setminus\mathscr{D}_{2n}$]. 
Then, it follows another immediate corollary of Proposition \ref{kers}.   
 
\begin{corollary}\label{gens} 
The semigroups $\wEnd(C_n)$ and $\End(C_n)$ are generated by  $\mathscr{X}\cup\{g,h\}$ and $\mathscr{Y}\cup\{g,h\}$, respectively. 
\end{corollary}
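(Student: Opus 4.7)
The plan is to show that every element of $\wEnd(C_n)$ lies in the submonoid generated by $\mathscr{X}\cup\{g,h\}$ (and analogously for $\End(C_n)$ with $\mathscr{Y}$), which follows almost immediately from Proposition \ref{kers} together with the fact that $\mathscr{D}_{2n}=\langle g,h\rangle$ (Proposition \ref{aut}).

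More precisely, first I would dispatch the easy case: if $\alpha\in\mathscr{D}_{2n}=\Aut(C_n)$, then by Proposition \ref{aut} we have $\alpha\in\langle g,h\rangle$, so certainly $\alpha\in\langle\mathscr{X}\cup\{g,h\}\rangle$ (and likewise inside $\End(C_n)$, since $\mathscr{D}_{2n}\subseteq\End(C_n)$, we obtain $\alpha\in\langle\mathscr{Y}\cup\{g,h\}\rangle$). Next, suppose $\alpha\in\wEnd(C_n)\setminus\mathscr{D}_{2n}$. Since $\mathscr{X}$ is a transversal for $\mathcal{R}$ on $\wEnd(C_n)\setminus\mathscr{D}_{2n}$, there exists a unique $\beta\in\mathscr{X}$ with $\alpha\mathcal{R}\beta$; by Corollary \ref{Rrel} this gives $\ker(\alpha)=\ker(\beta)$. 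Applying Proposition \ref{kers}, we obtain some $\sigma\in\mathscr{D}_{2n}$ with $\alpha=\beta\sigma$, and since $\sigma\in\langle g,h\rangle$, we conclude $\alpha\in\langle\mathscr{X}\cup\{g,h\}\rangle$.

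For $\End(C_n)$ I would repeat the same argument verbatim: if $\alpha\in\End(C_n)\setminus\mathscr{D}_{2n}$, then $\alpha\mathcal{R}\beta$ for some $\beta\in\mathscr{Y}$, whence $\ker(\alpha)=\ker(\beta)$ by Corollary \ref{Rrel}, and Proposition \ref{kers} furnishes $\sigma\in\mathscr{D}_{2n}$ with $\alpha=\beta\sigma$. The only small check needed here is that this factorization actually takes place inside $\End(C_n)$; but $\beta\in\mathscr{Y}\subseteq\End(C_n)$ and $\sigma\in\mathscr{D}_{2n}=\Aut(C_n)\subseteq\End(C_n)$, so $\alpha=\beta\sigma\in\langle\mathscr{Y}\cup\{g,h\}\rangle$ within $\End(C_n)$, as required.

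There is really no genuine obstacle in this argument: the corollary is essentially a repackaging of Proposition \ref{kers} and so amounts to observing that the dihedral group acts on the right on each $\mathcal{R}$-class and that this action is transitive. The only mild subtlety worth flagging explicitly is ensuring that the factor $\sigma$ supplied by Proposition \ref{kers} stays within $\End(C_n)$ in the second case, which is automatic since $\mathscr{D}_{2n}\subseteq\End(C_n)$.
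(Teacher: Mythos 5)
Your argument is correct and is exactly the intended one: the paper presents this corollary as an immediate consequence of Proposition \ref{kers} (together with $\mathscr{D}_{2n}=\langle g,h\rangle$ and the identification of $\mathcal{R}$ with equality of kernels), which is precisely the decomposition $\alpha=\beta\sigma$ you give. Nothing further is needed.
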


Observe that we can choose $\mathscr{Y}=\mathscr{X}\cap\End(C_n)$. 

Now, let $S\in\{\End(C_n),\wEnd(C_n)\}$. Define an equivalence relation $\sim$ on $S$ by $\alpha\sim\beta$ if and only if 
there exist $\sigma,\xi\in\mathscr{D}_{2n}$ such that $\alpha=\sigma\beta\xi$, for $\alpha,\beta\in S$. 
Clearly, for $\alpha,\beta\in S$, if $\alpha\sim\beta$ then $\rank(\alpha)=\rank(\beta)$. 

Next, define a quasi-order relation $\preccurlyeq$ on $S$ by $\alpha\preccurlyeq\beta$ if and only 
if there exist $\alpha'\in [\alpha]_\sim$ and $\beta'\in [\beta]_\sim$ such that $\ker(\alpha')\subseteq\ker(\beta')$, for $\alpha,\beta\in S$. 

It is easy to show that $\sim$ is the equivalence relation on $S$ induced by $\preccurlyeq$, i.e. $\alpha\sim\beta$ if and only if $\alpha\preccurlyeq\beta$ and $\beta\preccurlyeq\alpha$, 
for $\alpha,\beta\in S$.  

Let $\tilde{\mathscr{X}}$  and $\tilde{\mathscr{Y}}$ be transversals for the relation $\sim$ in $\mathscr{X}$ and $\mathscr{Y}$, respectively. 
Then, clearly, we have:  

\begin{lemma}\label{tils} 
The semigroups $\wEnd(C_n)$ and $\End(C_n)$ are generated by  $\tilde{\mathscr{X}}\cup\{g,h\}$ and $\tilde{\mathscr{Y}}\cup\{g,h\}$, respectively. 
\end{lemma}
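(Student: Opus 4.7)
The plan is to show that each element of the generating set from Corollary \ref{gens} already lies in the submonoid generated by the smaller set, and then invoke Corollary \ref{gens}.

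First, I would observe that $\mathscr{D}_{2n} = \langle g, h \rangle$, so $\sigma, \xi \in \mathscr{D}_{2n}$ can be written as words in $g$ and $h$; in particular, any product $\sigma\gamma\xi$ with $\gamma \in \tilde{\mathscr{X}}$ (respectively $\gamma \in \tilde{\mathscr{Y}}$) belongs to $\langle \tilde{\mathscr{X}} \cup \{g,h\}\rangle$ (respectively $\langle \tilde{\mathscr{Y}} \cup \{g,h\}\rangle$).

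Next, given any $\alpha \in \mathscr{X}$, by definition of $\tilde{\mathscr{X}}$ as a transversal for $\sim$ in $\mathscr{X}$, there exists $\tilde{\alpha} \in \tilde{\mathscr{X}}$ with $\alpha \sim \tilde{\alpha}$, that is, $\alpha = \sigma \tilde{\alpha} \xi$ for some $\sigma, \xi \in \mathscr{D}_{2n}$. Hence $\alpha \in \langle \tilde{\mathscr{X}} \cup \{g,h\}\rangle$. This shows $\mathscr{X} \cup \{g,h\} \subseteq \langle \tilde{\mathscr{X}} \cup \{g,h\}\rangle$, and Corollary \ref{gens} then yields $\wEnd(C_n) = \langle \mathscr{X} \cup \{g,h\}\rangle \subseteq \langle \tilde{\mathscr{X}} \cup \{g,h\}\rangle \subseteq \wEnd(C_n)$.

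The argument for $\End(C_n)$ is identical, using that $\mathscr{D}_{2n} \subseteq \End(C_n)$ (by Proposition \ref{aut}), so the relations $\alpha = \sigma \tilde{\alpha} \xi$ witnessing $\alpha \sim \tilde{\alpha}$ for $\alpha \in \mathscr{Y}$ and $\tilde{\alpha} \in \tilde{\mathscr{Y}}$ take place inside $\End(C_n)$; combined with Corollary \ref{gens} for $\End(C_n)$, this gives $\End(C_n) = \langle \tilde{\mathscr{Y}} \cup \{g,h\}\rangle$. There is no real obstacle here: the statement is essentially a repackaging of Corollary \ref{gens} once one notes that $\sim$-equivalence identifies elements up to left and right multiplication by members of $\mathscr{D}_{2n} = \langle g,h \rangle$.
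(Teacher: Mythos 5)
Your proof is correct and fills in exactly the argument the paper leaves implicit (the paper states Lemma \ref{tils} with only ``Then, clearly, we have:''): every element of $\mathscr{X}$ (respectively $\mathscr{Y}$) is of the form $\sigma\tilde{\alpha}\xi$ with $\tilde{\alpha}$ in the transversal and $\sigma,\xi\in\mathscr{D}_{2n}=\langle g,h\rangle$, so Corollary \ref{gens} immediately gives the claim. No issues.
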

 
Let us consider the following algorithm:

\begin{algorithm} \label{algo} 
\begin{description}

\item[\em Input.] Let $\mathscr{Z}$ be a subset of $\wEnd(C_n)\setminus\mathscr{D}_{2n}$. 

\item[\em Step $0$.] For $1\leqslant k\leqslant {\lfloor\frac{n}{2}\rfloor}+1$, define $\mathscr{Z}_k:=\{\alpha\in \mathscr{Z}\mid \rank(\alpha)=k\}$. 

\item[\em Step $1$.] Define $\mathscr{W}_1:=\mathscr{Z}_{{\lfloor\frac{n}{2}\rfloor}+1}$. 

\item[\em Step $k$.] {\rm[}$2\leqslant k\leqslant {\lfloor\frac{n}{2}\rfloor}+1${\rm]} Define 
$
\mathscr{W}_{k}:=\mathscr{Z}_{{\lfloor\frac{n}{2}\rfloor}-k+2}\setminus\left\{\alpha\in\mathscr{Z}_{{\lfloor\frac{n}{2}\rfloor}-k+2}\mid 
\mbox{$\beta\preccurlyeq\alpha$, for some $\beta\in\displaystyle\bigcup_{i=1}^{k-1}\mathscr{W}_i$}\right\}. 
$
\item[\em Output.] Let $\mathscr{W}=\displaystyle\bigcup_{i=1}^{{\lfloor\frac{n}{2}\rfloor}+1}\mathscr{W}_i$. 
\end{description}
\end{algorithm}
Let $\hat{\mathscr{X}}$  and $\hat{\mathscr{Y}}$ be the sets resulting from applying Algorithm \ref{algo} to $\tilde{\mathscr{X}}$ and $\tilde{\mathscr{Y}}$, respectively.

\begin{lemma}\label{fund} 
Let $\alpha,\beta\in\wEnd(C_n)$ be such that $\im(\alpha)=\{1,2,\ldots,k-1\}$, $\im(\beta)=\{1,2,\ldots,\ell\}$ and $\ker(\beta)\subseteq\ker(\alpha)$, 
with $2\leqslant k\leqslant \ell\leqslant \lfloor\frac{n}{2}\rfloor+1$. 
\begin{enumerate}
\item If $\alpha\in\End(C_n)$   {\em(}and so $n$ must be even{\em)} then there exists $\gamma\in\End(C_n)$  such that $\rank(\gamma)=k$ and $\alpha=\beta\gamma$.  

\item If $k<\lfloor\frac{n}{2}\rfloor+1$ then there exists $\gamma\in\wEnd(C_n)$  
such that $\rank(\gamma)=k$ and $\alpha=\beta\gamma$. 

\item If $k=\lfloor\frac{n}{2}\rfloor+1$ {\em(}and so $n$ must be even{\em)} and $\alpha\not\in\End(C_n)$ 
then there exist $\lambda\in \hat{\mathscr{X}}$ and $\eta,\zeta\in \mathscr{D}_{2n}$ such that 
$\alpha=\beta\eta\lambda\zeta$. 
\end{enumerate} 
\end{lemma}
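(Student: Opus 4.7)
In every case the starting point is the surjection $\phi:\{1,\dots,\ell\}\to\{1,\dots,k-1\}$ defined by $j\phi=a$ iff $j\beta^{-1}\subseteq a\alpha^{-1}$. The hypothesis $\ker(\beta)\subseteq\ker(\alpha)$ makes $\phi$ well-defined, with $\alpha=\beta\phi$ on $\im(\beta)$; Lemma~\ref{sker} gives $(j+1)\phi-j\phi\in\{-1,0,1\}$ for $1\le j<\ell$, the value $0$ being forbidden when $\alpha\in\End(C_n)$. Any transformation $\gamma$ on $\Omega_n$ extending $\phi$ to all of $\Omega_n$ therefore yields $\alpha=\beta\gamma$, so parts~(1) and (2) reduce to producing an extension in the appropriate monoid of rank exactly~$k$.

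For (1) and (2), I would set $\gamma|_{\{1,\dots,\ell\}}=\phi$ and define $\gamma$ on $\{\ell+1,\dots,n\}$ as a walk on $\Omega_n$: starting one step from $\ell\phi$, passing through the new value~$k$ at least once (so that $\im(\gamma)=\{1,\dots,k\}$), and ending one step from $1\phi$ so that the closing-edge condition of Proposition~\ref{chars} holds at $\{n,1\}$. In (2), where $k\le\lfloor n/2\rfloor$, the walk has $n-\ell\ge\lceil n/2\rceil-1$ positions and the $\wEnd$-allowed $0$-step absorbs any parity obstruction, so a valid walk exists. In (1), $n$ is forced to be even (by Lemma~\ref{eqim} applied to any two elements with the same $\alpha$-image; if $\alpha$ is injective then $\alpha\in\Aut(C_n)\subseteq\mathscr{D}_{2n}$ and the claim is trivial), and a parity check---namely that $1\phi-\ell\phi$ has the same parity as $\ell-1$, because $\phi$ moves only by $\pm 1$, matches $n-\ell+1$ modulo~$2$ precisely when $n$ is even---shows that the required length-$(n-\ell)$ $\pm 1$ walk from $\ell\phi$ through $k$ to $1\phi\pm 1$ exists; an explicit construction is to ascend to $k$, descend toward $1\phi\pm 1$, padding with length-$2$ oscillations as needed.

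For (3), $n$ is even, $\beta$ has rank $\lfloor n/2\rfloor+1$ so $\beta\in\End(C_n)$ by Lemma~\ref{wpar}; combined with $\alpha\notin\End(C_n)$ and $\alpha=\beta\phi$, this forces $\phi$ to contain a $0$-step. Hence no extension of $\phi$ lies in $\End(C_n)$, and by Lemma~\ref{wpar} no $\wEnd(C_n)$-extension of $\phi$ can have rank $k=\lfloor n/2\rfloor+1$. I would therefore extend $\phi$, as in part~(2), to some $\gamma\in\wEnd(C_n)$ of rank $\lfloor n/2\rfloor$ with $\im(\gamma)=\{1,\dots,k-1\}$. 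Since $\gamma\notin\mathscr{D}_{2n}$, there is a unique $\lambda_0\in\tilde{\mathscr X}$ with $\gamma\sim\lambda_0$, giving $\alpha=\beta\eta\lambda_0\zeta$ for some $\eta,\zeta\in\mathscr{D}_{2n}$. If $\lambda_0\in\hat{\mathscr X}$, set $\lambda=\lambda_0$; otherwise Algorithm~\ref{algo} provides some $\mu\in\hat{\mathscr X}$ of strictly greater rank (hence rank $\lfloor n/2\rfloor+1$, so $\mu\in\End(C_n)$) with $\mu\preccurlyeq\lambda_0$. I then use the kernel inclusion witnessing $\mu\preccurlyeq\lambda_0$ together with the freedom in extending $\phi$ on $\{\ell+1,\dots,n\}$ to replace $\gamma$ by a new extension $\gamma'$, still satisfying $\alpha=\beta\gamma'$, whose $\sim$-class contains $\mu$; then $\alpha=\beta\eta'\mu\zeta'$ for suitable $\eta',\zeta'\in\mathscr{D}_{2n}$ and $\lambda=\mu\in\hat{\mathscr X}$ does the job.

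\textbf{Main obstacle.} The hardest step is the rerouting argument in part~(3): showing that the freedom available on $\{\ell+1,\dots,n\}$ (there are $n-\ell=\lfloor n/2\rfloor-1$ such positions and the $\wEnd$-step set $\{-1,0,1\}$) is enough to land the new extension $\gamma'$ in the $\sim$-class of the higher-rank $\mu\in\hat{\mathscr X}\cap\End(C_n)$ witnessed by the algorithm, avoiding any circular appeal to the lemma itself. The walks of (1) and (2) are routine but require a careful parity analysis based on Propositions~\ref{chars} and Lemma~\ref{ims}.
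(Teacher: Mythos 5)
Your treatment of parts (1) and (2) follows the paper's own strategy: extend $\phi$ to a walk on the positions $\{\ell+1,\dots,n\}$ with steps in $\{-1,0,1\}$ (steps $\pm1$ only in the $\End(C_n)$ case) that picks up one extra image value and closes up correctly at the edge $\{n,1\}$ after a parity check. The paper simply writes these walks out explicitly, and in part (1) with $k=\lfloor\frac{n}{2}\rfloor+1$ it lists the four possible maps $\phi$ separately; your outline of these cases is essentially correct, modulo the routine verifications you defer.

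Part (3), however, has a genuine gap, located exactly at the step you flag as the ``main obstacle''. Your fallback plan --- when the representative $\lambda_0\in\tilde{\mathscr X}$ of $[\gamma]_\sim$ fails to lie in $\hat{\mathscr X}$, reroute the extension of $\phi$ so that the new $\gamma'$ lands in the $\sim$-class of the higher-rank $\mu\in\hat{\mathscr X}$ --- cannot succeed for any choice of $\gamma'$. The relation $\sim$ preserves rank, $\mu$ has rank $\lfloor\frac{n}{2}\rfloor+1$, and, as you yourself note, every extension $\gamma'$ of $\phi$ lies outside $\End(C_n)$ (since $\phi$ has a $0$-step) and hence has rank at most $\lfloor\frac{n}{2}\rfloor$ by Lemma \ref{wpar}; so $\gamma'\sim\mu$ is impossible. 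The correct resolution is to show that the bad case never arises. Choose the extension $\gamma$ (as the paper's $\gamma_i,\gamma'_i$ do) so that $\ker(\gamma)$ has \emph{two} distinct classes each containing a pair of consecutive (mod $n$) vertices. Any $\delta\in\wEnd(C_n)$ of rank $\frac{n}{2}+1$ lies in $\End(C_n)$ by Lemma \ref{wpar}, so by Lemma \ref{eqim} no kernel class of $\delta$ contains two consecutive vertices; but a kernel with $\frac{n}{2}+1$ classes refining the $\frac{n}{2}$ classes of $\ker(\gamma)$ can split only one class of $\gamma$ and must therefore retain one of the two consecutive pairs --- a contradiction. Hence no rank-$(\frac{n}{2}+1)$ kernel is contained in $\ker(\gamma)$; this property is preserved under pre- and post-composition by elements of $\mathscr{D}_{2n}$, so the representative $\tau_2\in\tilde{\mathscr X}$ of $[\gamma]_\sim$ is never discarded by Algorithm \ref{algo}. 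Thus $\tau_2\in\hat{\mathscr X}$ already, and one takes $\lambda=\tau_2$; no rerouting is needed or possible.
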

\begin{proof}
Let $\phi:\{1,2,\ldots,\ell\}\longrightarrow\{1,2,\ldots,k-1\}$ be the surjective mapping defined by 
$j\phi=i$ if and only if $j\beta^{-1}\subseteq i\alpha^{-1}$, for $1\leqslant  j\leqslant \ell$ and $1\leqslant i\leqslant k-1$. 
Then, by Lemma \ref{sker}, we have 
\begin{equation}\label{phi}
\mbox{$(j+1)\phi\in\{j\phi-1,j\phi,j\phi+1\}$ (and $(j+1)\phi\in\{j\phi-1, j\phi+1\}$, if $\alpha\in\End(C_n)$), for $1\leqslant j\leqslant \ell-1$.}
\end{equation}

\smallskip

Let us first consider the case $k<\lfloor\frac{n}{2}\rfloor+1$. We will consider two subcases. 

\smallskip 

First, suppose that $1\phi\leqslant  k-\ell\phi$. Then 
$$\textstyle 
\ell+\ell\phi+1\phi-1\leqslant \ell+\ell\phi+(k-\ell\phi)-1=\ell+k-1<(\lfloor\frac{n}{2}\rfloor+1)+(\lfloor\frac{n}{2}\rfloor+1)-1\leqslant n+1 
$$
and so define 
\begin{multline}\nonumber 
\gamma=\left(\begin{array}{ccc|ccccccc|}
1&\cdots&\ell&\ell+1&\cdots&\ell+\ell\phi-1&\ell+\ell\phi&\ell+\ell\phi+1&\cdots&\ell+\ell\phi+1\phi-1\\ 
1\phi&\cdots&\ell\phi&\ell\phi-1&\cdots&1&n&1&\cdots&1\phi-1
\end{array}\right.
\\
\left.\begin{array}{|ccc}
\ell+\ell\phi+1\phi&\cdots&n\\
1\phi&\cdots&* 
\end{array}\right), 
\end{multline}
where the images of $[\ell+\ell\phi+1\phi,n]$ alternate between $1\phi$ and $1\phi-1$, in case of $\ell+\ell\phi+1\phi\leqslant n$  
(so $n\gamma=1\phi-1$, if $n-\ell-\ell\phi-1\phi+1$ is even, and  $n\gamma=1\phi$, otherwise). 

In view of (\ref{phi}), it is clear that $\gamma\in\wEnd(C_n)$ and $\rank(\gamma)=k$. Moreover, from $j\beta^{-1}\subseteq (j\phi)\alpha^{-1}$, for $1\leqslant  j\leqslant \ell$, 
it follows immediately that $\alpha=\beta\gamma$. 

On the other hand, suppose that $\alpha\in\End(C_n)$. Then $n$ must be even and, by (\ref{phi}), if $\ell$ is odd then $1\phi$ is odd if and only if $\ell\phi$ is odd, 
and if $\ell$ is even then $1\phi$ is odd if and only if $\ell\phi$ is even, whence we obtain, in both cases, that $\ell+\ell\phi+1\phi-1$ is even. 
Hence $n\gamma=1\phi-1$ and so we may conclude that $\gamma\in\End(C_n)$. 

\smallskip 

Secondly, suppose that $1\phi> k-\ell\phi$. Then 
$$\textstyle 
\ell+k-\ell\phi+k-1\phi-1 <   \ell+k-\ell\phi+k-k+\ell\phi-1 = \ell+k-1<(\lfloor\frac{n}{2}\rfloor+1)+(\lfloor\frac{n}{2}\rfloor+1)-1\leqslant n+1 
$$
and so define 
\begin{multline}\nonumber 
\gamma=\left(\begin{array}{ccc|cccccc|}
1&\cdots&\ell&\ell+1&\cdots&\ell+k-\ell\phi&\ell+k-\ell\phi+1&\cdots&\ell+k-\ell\phi+k-1\phi-1\\ 
1\phi&\cdots&\ell\phi&\ell\phi+1&\cdots&k&k-1&\cdots&1\phi+1
\end{array}\right.
\\
\left.\begin{array}{|ccc}
\ell+k-\ell\phi+k-1\phi&\cdots&n\\
1\phi&\cdots&* 
\end{array}\right), 
\end{multline}
where the images of $[\ell+k-\ell\phi+k-1\phi,n]$ alternate between $1\phi$ and $1\phi+1$, in case of $\ell+k-\ell\phi+k-1\phi\leqslant n$  
(so $n\gamma=1\phi+1$, if $n-\ell-2k+\ell\phi+1\phi+1$ is even, and  $n\gamma=1\phi$, otherwise). 

Clearly, by (\ref{phi}), we have $\gamma\in\wEnd(C_n)$ and $\rank(\gamma)=k$. 
As in the first subcase, by the same argument, we also obtain $\alpha=\beta\gamma$. 

Now, suppose that $\alpha\in\End(C_n)$. Then $n$ is even. 
On the other hand, as in the previous subcase, we may conclude that $\ell-\ell\phi-1\phi-1$ is even and so that 
$\ell+k-\ell\phi+k-1\phi-1$ is also even. 
Hence $n\gamma=1\phi+1$ and so we can also conclude that $\gamma\in\End(C_n)$. 

\medskip 

Next, we consider the case $k=\lfloor\frac{n}{2}\rfloor+1$. Then $n$ is even and $k=\ell=\frac{n}{2}+1$. 
Since $\phi:\{1,2,\ldots,\frac{n}{2}+1\}\longrightarrow\{1,2,\ldots,\frac{n}{2}\}$ is a surjective mapping, 
all kernel classes of $\phi$ are singletons except exactly one that has precisely two elements. 

Observe that, as $\rank(\beta)=\frac{n}{2}+1$, by Lemma \ref{wpar}, $\beta\in\End(C_n)$. 

Again, we will consider two subcases. 

\smallskip 

First, suppose that $(j+1)\phi\in\{j\phi-1, j\phi+1\}$ for all $1\leqslant j\leqslant \ell-1$.  
Then the unique non-singleton kernel class of $\phi$ must be $\{1,3\}$ or $\{\frac{n}{2}-1,\frac{n}{2}+1\}$. 
Furthermore, $\phi$ must be one of the following four mappings: 
$$
\begin{pmatrix} 
1&2&3&\cdots&\frac{n}{2}+1\\
2&1&2&\cdots&\frac{n}{2}
\end{pmatrix}, 
\begin{pmatrix} 
1&2&3&\cdots&\frac{n}{2}+1\\
\frac{n}{2}-1&\frac{n}{2}&\frac{n}{2}-1&\cdots&1
\end{pmatrix}, 
$$
$$
\begin{pmatrix} 
1&\cdots&\frac{n}{2}-1&\frac{n}{2}&\frac{n}{2}+1\\
1&\cdots&\frac{n}{2}-1&\frac{n}{2}&\frac{n}{2}-1
\end{pmatrix}
\quad\text{or}\quad 
\begin{pmatrix} 
1&\cdots&\frac{n}{2}-1&\frac{n}{2}&\frac{n}{2}+1\\
\frac{n}{2}&\cdots&2&1&2
\end{pmatrix}. 
$$
Therefore, by taking, respectively,
$$
\gamma=\left(\begin{array}{ccccc|cccc}
1&2&3&\cdots&\frac{n}{2}+1   &\frac{n}{2}+2&\frac{n}{2}+3&\cdots&n\\
2&1&2&\cdots&\frac{n}{2}       &\frac{n}{2}+1 & \frac{n}{2} & \cdots & 3
\end{array}\right), 
$$
$$
\gamma=\left(\begin{array}{ccccc|cccc}
1&2&3&\cdots&\frac{n}{2}+1   &\frac{n}{2}+2&\frac{n}{2}+3&\cdots&n\\
\frac{n}{2}-1&\frac{n}{2}&\frac{n}{2}-1&\cdots&1 & n & 1 & \cdots & \frac{n}{2}-2
\end{array}\right), 
$$
$$
\gamma=\left(\begin{array}{ccccc|ccccc}
1&\cdots&\frac{n}{2}-1&\frac{n}{2}&\frac{n}{2}+1    &\frac{n}{2}+2&\frac{n}{2}+3&\cdots&n-1&n\\
1&\cdots&\frac{n}{2}-1&\frac{n}{2}&\frac{n}{2}-1    &\frac{n}{2}-2&\frac{n}{2}-3&\cdots&1&n
\end{array}\right)
\quad\text{and}\quad 
$$
$$
\gamma=\left(\begin{array}{ccccc|ccc}
1&\cdots&\frac{n}{2}-1&\frac{n}{2}&\frac{n}{2}+1   & \frac{n}{2}+2 & \cdots & n\\
\frac{n}{2}&\cdots&2&1&2   & 3 &\cdots &\frac{n}{2}+1
\end{array}\right), 
$$
we have $\gamma\in\End(C_n)$, $\rank(\gamma)=\frac{n}{2}+1=k$ and $\alpha=\beta\gamma$. 

Notice that, as $\beta,\gamma\in\End(C_n)$, this subcase occurs if and only if $\alpha\in\End(C_n)$. 

\smallskip 

Finally, we consider the opposite subcase, i.e.  there exists $1\leqslant i\leqslant \ell-1\leqslant\frac{n}{2}$ such that $(i+1)\phi\not\in\{i\phi-1, i\phi+1\}$. 
Then, by (\ref{phi}), we have $(i+1)\phi=i\phi$.
Notice that, in this case, $\alpha\not\in\End(C_n)$.  
As all kernel classes of $\phi$ are singletons except exactly one, 
we have $(j+1)\phi\in\{j\phi-1, j\phi+1\}$, for all $j\in\{1,2,\ldots,\ell-1\}\setminus\{i\}$. 
Furthermore, $\phi$ must be one of the following mappings: 
$$
\phi_i=\begin{pmatrix} 
1&\cdots&i&i+1&i+2&\cdots&\frac{n}{2}+1\\
1&\cdots&i&i&i+1&\cdots&\frac{n}{2}
\end{pmatrix}
~\text{or}~
\phi'_i=\begin{pmatrix} 
1&\cdots&i&i+1&\cdots&\frac{n}{2}+1\\
\frac{n}{2}&\cdots&\frac{n}{2}-i+1&\frac{n}{2}-i+1&\cdots&1
\end{pmatrix}. 
$$
Now, define 
$$
\gamma_i=\left(\begin{array}{ccccccc|ccc}
1&\cdots&i&i+1&i+2&\cdots&\frac{n}{2}+1    & \frac{n}{2}+2 & \cdots & n\\
1&\cdots&i&i&i+1&\cdots&\frac{n}{2}       & \frac{n}{2} & \cdots & 2
\end{array}\right)
$$
and 
$$
\gamma'_i=\left(\begin{array}{cccccc|cccc}
1&\cdots&i&i+1&\cdots&\frac{n}{2}+1     & \frac{n}{2}+2 & \cdots & n \\
\frac{n}{2}&\cdots&\frac{n}{2}-i+1&\frac{n}{2}-i+1&\cdots&1 & 1 & \cdots & \frac{n}{2}-1
\end{array}\right), 
$$
if $1\leqslant i<\frac{n}{2}$, and 
$$
\gamma_i=\left(\begin{array}{cccc|ccc}
1&\cdots& \frac{n}{2} & \frac{n}{2}+1    & \frac{n}{2}+2 & \cdots & n\\
1&\cdots&\frac{n}{2} & \frac{n}{2}      & \frac{n}{2}-1 & \cdots & 1
\end{array}\right)
\quad\text{and}\quad 
\gamma'_i=\left(\begin{array}{ccccc|ccc}
1&\cdots&\frac{n}{2}-1&\frac{n}{2}&\frac{n}{2}+1     & \frac{n}{2}+2 & \cdots & n \\
\frac{n}{2}&\cdots&2&1 & 1 &2&\cdots & \frac{n}{2}
\end{array}\right), 
$$ 
if $i=\frac{n}{2}$. 
Clearly, $\gamma_i,\gamma'_i\in\wEnd(C_n)$ and $\rank(\gamma_i)=\rank(\gamma'_i)=\frac{n}{2}$. 
Moreover, if $\phi=\phi_i$ then $\alpha=\beta\gamma_i$ and if $\phi=\phi'_i$ then $\alpha=\beta\gamma'_i$. 

Let $\gamma\in\{\gamma_i, \gamma'_i\}$ be such that $\alpha=\beta\gamma$. 

Since $\gamma$ has two distinct kernel classes with two consecutive (modulo $n$) elements and, by Lemma \ref{wpar}, 
any element of $\wEnd(C_n)$ with rank equal to $\frac{n}{2}+1$ belongs to $\End(C_n)$, we must have 
\begin{equation}\label{nsub}
\ker(\delta)\not\subseteq\ker(\gamma),
\end{equation}
for all $\delta\in\wEnd(C_n)$ such that $\rank(\delta)=\frac{n}{2}+1$. 

Let $\tau_1\in\mathscr{X}$ be such that $\ker(\tau_1)=\ker(\gamma)$. 
Take $\sigma\in\mathscr{D}_{2n}$ such that $\gamma=\tau_1\sigma$. 

Let $\tau_2\in\tilde{\mathscr{X}}$ be such that $\tau_2\sim\tau_1$. 
Take $\sigma_1,\sigma_2\in\mathscr{D}_{2n}$ such that $\tau_1=\sigma_1\tau_2\sigma_2$. 

Suppose, by contradiction, that $\tau_2\not\in\hat{\mathscr{X}}$. 
Then, as $\rank(\tau_2)=\rank(\tau_1)=\rank(\gamma)=\frac{n}{2}$, 
there exists $\delta\in \hat{\mathscr{X}}$ such that $\delta\preccurlyeq\tau_2$ and $\rank(\delta)=\frac{n}{2}+1$. 

Let $\delta'\in[\delta]_\sim$ and $\tau'_2\in[\tau_2]_\sim$ be such that $\ker(\delta')\subseteq\ker(\tau'_2)$. 
Let $\xi_1,\xi_2\in\mathscr{D}_{2n}$ be such that $\tau_2=\xi_1\tau'_2\xi_2$. 

Hence, $\tau_1=\sigma_1\tau_2\sigma_2 = \sigma_1\xi_1\tau'_2\xi_2\sigma_2$ and so 
$\ker(\sigma_1\xi_1\delta'\xi_2\sigma_2)\subseteq \ker(\sigma_1\xi_1\tau'_2\xi_2\sigma_2)=\ker(\tau_1)=\ker(\gamma)$. 
Now, since $\rank(\sigma_1\xi_1\delta'\xi_2\sigma_2)=\rank(\delta')=\rank(\delta)=\frac{n}{2}+1$, by (\ref{nsub}), we obtain a contradiction. 

Thus $\tau_2\in\hat{\mathscr{X}}$. 

Finally, as $\gamma=\tau_1\sigma=\sigma_1\tau_2\sigma_2\sigma$, we have $\alpha=\beta\gamma=\beta \sigma_1\tau_2\sigma_2\sigma$, 
as required. 
 \end{proof}

Now, we can prove our main result:

\begin{theorem}\label{ranks}
The sets $\hat{\mathscr{X}}\cup\{g,h\}$ and $\hat{\mathscr{Y}}\cup\{g,h\}$ are generating sets with minimum size of $\wEnd(C_n)$ and $\End(C_n)$, respectively. 
In particular, $\wEnd(C_n)$ and $\End(C_n)$ have ranks $|\hat{\mathscr{X}}|+2$ and $|\hat{\mathscr{Y}}|+2$, respectively. 
\end{theorem}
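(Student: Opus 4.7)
The strategy is to prove two things for the pair $(\wEnd(C_n),\hat{\mathscr{X}})$: that the displayed set is a generating set, and that no smaller one works. The $\End(C_n)$ version is identical, carried out inside $\End(C_n)$ using only clause (1) of Lemma \ref{fund}.

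For the generation claim, Lemma \ref{tils} reduces the problem to showing $\tilde{\mathscr{X}}\subseteq\langle\hat{\mathscr{X}}\cup\{g,h\}\rangle$, which I plan to prove by downward induction on $\rank(\alpha)$. The base case $\rank(\alpha)=\lfloor\frac{n}{2}\rfloor+1$ is immediate since Step~1 of Algorithm \ref{algo} places all such elements into $\hat{\mathscr{X}}$. For the inductive step, an $\alpha\in\tilde{\mathscr{X}}\setminus\hat{\mathscr{X}}$ of rank $r$ is, by the algorithm, dominated by some $\beta\in\hat{\mathscr{X}}$ of strictly greater rank $\ell$, already in $\langle\hat{\mathscr{X}}\cup\{g,h\}\rangle$ by induction. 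Using (\ref{trans}) to normalise images to $\{1,\dots,r\}$ and $\{1,\dots,\ell\}$, I select representatives $\alpha'\sim\alpha$ and $\beta'\sim\beta$ with $\ker(\beta')\subseteq\ker(\alpha')$, satisfying the hypotheses of Lemma \ref{fund} with $k=r+1$. In its clauses (1) and (2) one has $\alpha'=\beta'\gamma$ with $\rank(\gamma)=r+1$; as $\gamma\notin\mathscr{D}_{2n}$ is $\sim$-equivalent to some element of $\tilde{\mathscr{X}}$ of the same rank $r+1>r$, the inductive hypothesis places $\gamma$, hence $\alpha'$, and finally $\alpha$, in the generated submonoid. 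In clause (3) the expression $\alpha'=\beta'\eta\lambda\zeta$ directly uses $\lambda\in\hat{\mathscr{X}}$ and $\eta,\zeta\in\mathscr{D}_{2n}$.

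For the minimality, let $U$ generate $\wEnd(C_n)$. Since the rank of any product in $U$ is bounded above by the ranks of its factors, and every element of $\mathscr{D}_{2n}$ has rank $n$, any $U$-factorisation of an element of $\mathscr{D}_{2n}$ uses only factors from $\mathscr{D}_{2n}$; thus $U\cap\mathscr{D}_{2n}$ generates the rank-two group $\mathscr{D}_{2n}$, forcing $|U\cap\mathscr{D}_{2n}|\geqslant 2$. The heart of the argument is producing an injection $\alpha\mapsto u_\alpha$ from $\hat{\mathscr{X}}$ to $U\setminus\mathscr{D}_{2n}$ with $u_\alpha\sim\alpha$. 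Given $\alpha\in\hat{\mathscr{X}}$ and a factorisation $\alpha=u_1\cdots u_m$ over $U$, take the least $j$ with $u_j\notin\mathscr{D}_{2n}$ and set $\pi=u_1\cdots u_{j-1}\in\mathscr{D}_{2n}$; then $\pi u_j\sim u_j$ and $\ker(\pi u_j)\subseteq\ker(\alpha)$, so $u_j\preccurlyeq\alpha$ and $\rank(u_j)\geqslant\rank(\alpha)$. The decisive step is ruling out strict inequality: if $\rank(u_j)>\rank(\alpha)$, choosing $\mu\in\tilde{\mathscr{X}}$ with $\mu\sim u_j$ yields $\mu\preccurlyeq\alpha$ with $\rank(\mu)>\rank(\alpha)$, and iterating the removal rule of Algorithm \ref{algo} via transitivity of $\preccurlyeq$ produces some $\nu\in\hat{\mathscr{X}}$ with $\rank(\nu)>\rank(\alpha)$ and $\nu\preccurlyeq\alpha$, contradicting $\alpha\in\hat{\mathscr{X}}$. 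Therefore $\rank(u_j)=\rank(\alpha)$, which forces $\ker(\pi u_j)=\ker(\alpha)$, and Proposition \ref{kers} gives $u_j\sim\alpha$. Taking $u_\alpha=u_j$, injectivity of $\alpha\mapsto u_\alpha$ follows because distinct elements of $\tilde{\mathscr{X}}$ lie in distinct $\sim$-classes; thus $|U\setminus\mathscr{D}_{2n}|\geqslant|\hat{\mathscr{X}}|$ and $|U|\geqslant|\hat{\mathscr{X}}|+2$.

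I expect the chain-termination argument in the minimality half to be the main obstacle: one must confirm that starting from any would-be dominator outside $\hat{\mathscr{X}}$ and iterating the algorithm's removal rule using transitivity of $\preccurlyeq$ really does terminate inside $\hat{\mathscr{X}}$ with a strict rank improvement over $\alpha$. A secondary technical point in the generation step is ensuring the image-normalisation via (\ref{trans}) can be performed simultaneously on $\alpha$ and $\beta$ while preserving the kernel-containment witnesses of $\beta\preccurlyeq\alpha$, so that Lemma \ref{fund} applies to the chosen representatives.
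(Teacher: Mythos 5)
Your proposal is correct and follows essentially the same route as the paper: generation by downward induction on rank via Lemma \ref{tils} and Lemma \ref{fund}, and minimality by injecting $\hat{\mathscr{X}}$ into the non-permutation part of a generating set using the first non-permutation factor, the removal rule of Algorithm \ref{algo} and Proposition \ref{kers}. The only (harmless) differences are that the paper states its minimality argument for generating sets of the form $\mathscr{G}\cup\{g,h\}$ and finds the dominating element of $\hat{\mathscr{X}}$ by re-factorising over $\hat{\mathscr{X}}\cup\{g,h\}$ rather than invoking the algorithm's removal rule directly; moreover both of your flagged worries are non-issues, since a single application of the removal rule already lands in $\hat{\mathscr{X}}$ with strictly larger rank, and right multiplication by a power of $g$ preserves kernels, so the normalisation via (\ref{trans}) keeps the containment $\ker(\beta')\subseteq\ker(\alpha')$ intact.
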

\begin{proof}
We divide this proof in four steps. 

\smallskip 

\noindent\textsc{step 1}. 
First, we prove that $\hat{\mathscr{X}}\cup\{g,h\}$ is a generating set of $\wEnd(C_n)$. 
Since $\tilde{\mathscr{X}}\cup\{g,h\}$ generates $\wEnd(C_n)$, by Lemma \ref{tils}, 
it suffices to show that $\alpha\in \langle\hat{\mathscr{X}}\cup\{g,h\}\rangle$, 
for all $\alpha\in\tilde{\mathscr{X}}$. 

Observe that, by construction (Algorithm \ref{algo}), 
$\{\alpha\in \tilde{\mathscr{X}}\mid \rank(\alpha)={\lfloor\frac{n}{2}\rfloor}+1\}=\{\alpha\in \hat{\mathscr{X}}\mid \rank(\alpha)={\lfloor\frac{n}{2}\rfloor}+1\}$. 
Hence, trivially,  $\alpha\in \langle\hat{\mathscr{X}}\cup\{g,h\}\rangle$, for all $\alpha\in\tilde{\mathscr{X}}$ such that $\rank(\alpha)={\lfloor\frac{n}{2}\rfloor}+1$. 

Next, let $2\leqslant k\leqslant {\lfloor\frac{n}{2}\rfloor}+1$ and 
suppose, by induction hyphothesis, that $\gamma\in \langle\hat{\mathscr{X}}\cup\{g,h\}\rangle$, 
for all $\gamma\in\tilde{\mathscr{X}}$ such that $\rank(\gamma)\geqslant k$. 

Let $\alpha\in\tilde{\mathscr{X}}$ be such that $\rank(\alpha)=k-1$. Obviously, if $\alpha\in\hat{\mathscr{X}}$ then $\alpha\in \langle\hat{\mathscr{X}}\cup\{g,h\}\rangle$. 
So, suppose that $\alpha\in\tilde{\mathscr{X}}\setminus\hat{\mathscr{X}}$. 
Therefore, there exists $\beta\in\hat{\mathscr{X}}$ such that $\beta\preccurlyeq\alpha$ and $\rank(\alpha)<\rank(\beta)\leqslant {\lfloor\frac{n}{2}\rfloor}+1$. 
Let $\ell=\rank(\beta)$. 

Let $\alpha'\in[\alpha]_\sim$ and $\beta'\in[\beta]_\sim$ be such that $\ker(\beta')\subseteq\ker(\alpha')$. 
Take $\sigma_1,\sigma_2,\xi_1,\xi_2\in\mathscr{D}_{2n}$ such that $\alpha=\sigma_1\alpha'\sigma_2$ and $\beta'=\xi_1\beta\xi_2$. 

Now, by (\ref{trans}), we get $\im(\alpha' g^r)=\{1,2,\ldots,k-1\}$ and $\im(\beta' g^s)=\{1,2,\dots,\ell\}$, for some $0\leqslant r,s\leqslant n-1$. 
Moreover, $\ker(\beta' g^s)=\ker(\beta')\subseteq\ker(\alpha')=\ker(\alpha' g^r)$. 

If $k<{\lfloor\frac{n}{2}\rfloor}+1$ or $\alpha' g^r\in\End(C_n)$ then, by Lemma \ref{fund}, 
there exists $\gamma\in\wEnd(C_n)$ such that $\rank(\gamma)=k$ and $\alpha' g^r=\beta' g^s\gamma$. 
As $\tilde{\mathscr{X}}\cup\{g,h\}$ generates $\wEnd(C_n)$, $\gamma$ is a product of elements of $\tilde{\mathscr{X}}\cup\{g,h\}$, 
all of them with rank greater than or equal to $k$, and so, by induction hypothesis, $\gamma\in \langle\hat{\mathscr{X}}\cup\{g,h\}\rangle$. 
Thus 
$$
\alpha=\sigma_1\alpha'\sigma_2=\sigma_1\beta' g^s\gamma g^{n-r}\sigma_2 = \sigma_1\xi_1\beta\xi_2 g^s\gamma g^{n-r}\sigma_2 
\in \langle\hat{\mathscr{X}}\cup\{g,h\}\rangle.
$$
 
If $k={\lfloor\frac{n}{2}\rfloor}+1$ and $\alpha' g^r\not\in\End(C_n)$ then, by Lemma \ref{fund}, 
there exist $\lambda\in \hat{\mathscr{X}}$ and $\eta,\zeta\in \mathscr{D}_{2n}$ such that 
$\alpha' g^r=\beta' g^s\eta\lambda\zeta$. Thus 
$$
\alpha=\sigma_1\alpha'\sigma_2=\sigma_1\beta' g^s \eta\lambda\zeta g^{n-r}\sigma_2 = \sigma_1\xi_1\beta\xi_2 g^s \eta\lambda\zeta g^{n-r}\sigma_2
\in \langle\hat{\mathscr{X}}\cup\{g,h\}\rangle.
$$

Therefore, we proved that $\wEnd(C_n)=\langle\hat{\mathscr{X}}\cup\{g,h\}\rangle$. 

\medskip 

\noindent\textsc{step 2}.  
By replacing $\mathscr{X}$ by $\mathscr{Y}$, $\hat{\mathscr{X}}$ by $\hat{\mathscr{Y}}$, 
$\tilde{\mathscr{X}}$ by $\tilde{\mathscr{Y}}$ and $\wEnd(C_n)$ by $\End(C_n)$ in \textsc{step 1}, in a similar way 
(with the exception of the final part where it suffices to consider $\alpha' g^r\in\End(C_n)$), 
we show that $\hat{\mathscr{Y}}\cup\{g,h\}$ is a generating set of $\End(C_n)$. 

\medskip

\noindent\textsc{step 3}.  
Next, we prove that $\hat{\mathscr{X}}\cup\{g,h\}$ has minimum size among the generating sets of $\wEnd(C_n)$. 

Let $\mathscr{G}$ be a subset of $\wEnd(C_n)\setminus\mathscr{D}_{2n}$ such that $\mathscr{G}\cup\{g,h\}$ generates $\wEnd(C_n)$. 

\smallskip 

We begin by showing that, for all $\lambda\in\hat{\mathscr{X}}$, there exists $\alpha\in\mathscr{G}$ such that $\lambda\sim\alpha$. 

Take $\lambda\in\hat{\mathscr{X}}$. Since $\lambda$ is not a permutation and $\mathscr{G}\cup\{g,h\}$ generates $\wEnd(C_n)$, 
there exist $\sigma\in\mathscr{D}_{2n}$, $\alpha\in\mathscr{G}$ and $\beta_1\in\wEnd(C_n)$ such that $\lambda=\sigma\alpha\beta_1$. 
Hence, $\sigma^{-1}\lambda=\alpha\beta$ and so $\ker(\alpha)\subseteq\ker(\sigma^{-1}\lambda)$. 

Since $\alpha$ is not a permutation and $\hat{\mathscr{X}}\cup\{g,h\}$ generates $\wEnd(C_n)$, 
there exist $\xi\in\mathscr{D}_{2n}$, $\gamma\in\hat{\mathscr{X}}$ and $\beta_2\in\wEnd(C_n)$ such that $\alpha=\xi\gamma\beta_2$. 
Hence, $\xi^{-1}\alpha=\gamma\beta_2$ and so $\ker(\gamma)\subseteq\ker(\xi^{-1}\alpha)\subseteq\ker(\xi^{-1}\sigma^{-1}\lambda)$. 
As $\xi^{-1}\sigma^{-1}\lambda\sim\lambda$, we have $\gamma\preccurlyeq\lambda$. Then, since $\lambda,\gamma\in\hat{\mathscr{X}}$, 
by construction of $\hat{\mathscr{X}}$ (Algorithm \ref{algo}), we must have $\rank(\gamma)=\rank(\lambda)$ and so 
$\ker(\gamma)=\ker(\xi^{-1}\alpha)=\ker(\xi^{-1}\sigma^{-1}\lambda)$, from it follows also that $\ker(\alpha)=\ker(\sigma^{-1}\lambda)$. 
Thus, 
by Proposition \ref{kers}, we have 
$\alpha=\sigma^{-1}\lambda\tau$, for some $\tau\in\mathscr{D}_{2n}$, and so, in particular, $\lambda\sim\alpha$. 

\smallskip 

Now, for each $\lambda\in\hat{\mathscr{X}}$, choose $\alpha_\lambda\in\mathscr{G}$ such that $\lambda\sim\alpha_\lambda$. 
Let $\varphi:\hat{\mathscr{X}}\longrightarrow\mathscr{G}$ be the mapping defined by $\lambda\varphi=\alpha_\lambda$, for all $\lambda\in\hat{\mathscr{X}}$. 
Hence, $\varphi$ is an injective mapping. In fact, let $\lambda_1,\lambda_2\in\hat{\mathscr{X}}$ be such that $\lambda_1\varphi=\lambda_2\varphi$. 
Then $\lambda_1\sim\alpha_{\lambda_1}=\alpha_{\lambda_2}\sim\lambda_2$ and, since $\hat{\mathscr{X}}\subseteq\tilde{\mathscr{X}}$, 
it follows that $\lambda_1=\lambda_2$. Therefore, $|\hat{\mathscr{X}}|\leqslant |\mathscr{G}|$ and so $|\hat{\mathscr{X}}\cup\{g,h\}|\leqslant |\mathscr{G}\cup\{g,h\}|$,
as required. 

\medskip 

\noindent\textsc{step 4}. 
By replacing $\mathscr{X}$ by $\mathscr{Y}$, $\hat{\mathscr{X}}$ by $\hat{\mathscr{Y}}$, 
$\tilde{\mathscr{X}}$ by $\tilde{\mathscr{Y}}$ and $\wEnd(C_n)$ by $\End(C_n)$ in \textsc{step 3}, in a similar way,  
we show that $\hat{\mathscr{Y}}\cup\{g,h\}$ has minimum size among the generating sets of $\End(C_n)$, which completes the proof. 
\end{proof}

Using the GAP \cite{GAP4} to perform the calculations (in particular, by running an implementation of Algorithm \ref{algo} in GAP), we have: 
$$
\begin{tabular}{|c|c|c|c|c|c|c|c|c|c|c|} \hline 
$n$ & \multicolumn{2}{|c|}{$\Aut(C_n)$} & \multicolumn{2}{|c|}{$\sEnd(C_n)$} &\multicolumn{2}{|c|}{$\End(C_n)$} &\multicolumn{2}{|c|}{$\swEnd(C_n)$} &\multicolumn{2}{|c|}{$\wEnd(C_n)$} \\ \cline{2-11}
 & size & rank & size & rank & size & rank & size & rank & size & rank\\ \hline 
$3$ & $6$ & $2$ & $6$ & $2$ & $6$ & $2$ & $27$ & $3$ & $27$ & $3$ \\\hline 
$4$ & $8$ & $2$ & $32$ & $3$ & $32$ & $3$ & $36$ & $4$ & $84$ & $4$ \\\hline 
$5$ & $10$ & $2$ & $10$ & $2$ & $10$ & $2$ & $15$ & $3$ & $265$ & $4$ \\\hline 
$6$ & $12$ & $2$ & $12$ & $2$ & $132$ & $3$ & $18$ & $3$ & $858$ & $6$ \\\hline 
$7$ & $14$ & $2$ & $14$ & $2$ & $14$ & $2$ & $21$ & $3$ & $2765$ & $7$ \\\hline 
$8$ & $16$ & $2$ & $16$ & $2$ & $576$ & $4$ & $24$ & $3$ & $8872$ & $13$ \\\hline 
$9$ & $18$ & $2$ & $18$ & $2$ & $18$ & $2$ & $27$ & $3$ & $28269$ & $20$ \\\hline 
$10$ & $20$ & $2$ & $20$ & $2$ & $2540$ & $5$ & $30$ & $3$ & $89550$ & $50$ \\\hline 
$11$ & $22$ & $2$ & $22$ & $2$ & $22$ & $2$ & $33$ & $3$ & $282205$ & $105$ \\\hline 
$12$ & $24$ & $2$ & $24$ & $2$ & $11112$ & $10$ & $36$ & $3$ & $885492$ & $272$ \\\hline 
\end{tabular}
$$



\bigskip 

{\small \sf  

\noindent{\sc Ilinka Dimitrova},
Department of Mathematics,
Faculty of Mathematics and Natural Science,
South-West University "Neofit Rilski",
2700 Blagoevgrad,
Bulgaria;
e-mail: ilinka\_dimitrova@swu.bg.

\medskip

\noindent{\sc V\'\i tor H. Fernandes},
Center for Mathematics and Applications (NovaMath)
and Department of Mathematics, 
Faculdade de Ci\^encias e Tecnologia,
Universidade Nova de Lisboa,
Monte da Caparica,
2829-516 Caparica,
Portugal;
e-mail: vhf@fct.unl.pt.

\medskip

\noindent{\sc J\"{o}rg Koppitz},
Institute of Mathematics and Informatics,
Bulgarian Academy of Sciences,
1113 Sofia,
Bulgaria;
e-mail: koppitz@math.bas.bg.

\medskip

\noindent{\sc Teresa M. Quinteiro},
Instituto Superior de Engenharia de Lisboa,
1950-062 Lisboa,
Portugal.
Also:
Center for Mathematics and Applications (NovaMath),
Faculdade de Ci\^encias e Tecnologia,
Universidade Nova de Lisboa,
Monte da Caparica,
2829-516 Caparica,
Portugal;
e-mail: tmelo@adm.isel.pt.
} 

\end{document}